\newtheorem{theoremA}{Theorem}
\renewcommand{\thetheoremName}
\newtheorem*{thm}{Theorem}
\newtheorem{proposition[[]]}[theoremName]{Proposition G}
\newtheorem{theorem}{Theorem}[section]
\newtheorem{lemma}[theorem]{Lemma}
\newtheorem{proposition}[theorem]{Proposition}
\newtheorem{corollary}[theorem]{Corollary}
\newtheorem{corollaryA}[theoremA]{Corollary}
\theoremstyle{definition}
\newtheorem{definition}[theorem]{Definition}
\newtheorem{example}[theorem]{Example}
\newtheorem{remark}{Remark}
\numberwithin{equation}{section}
\newcommand{\eq}{\begin{equation}}
\newcommand{\eeq}{\end{equation}}
\newcommand{\al}{\begin{aligned}}
\newcommand{\aal}{\end{aligned}}
\newcommand{\RT}{\mathbb{R}^3}
\newcommand{\HT}{\mathbb{H}^3}
\newcommand{\ST}{\mathbb{S}^3}
\newcommand{\MT}{\mathbb{M}^3(\kappa)}
\begin{document}

\title[Conformal type of ends]{Conformal type of ends of revolution in space forms of constant sectional curvature.}
\author[V. Gimeno]{Vicent Gimeno}      
\address{Department of Mathematics-IMAC, Universitat Jaume I, Castell\'o de la Plana, Spain                        
}
\email{gimenov@uji.es}
\thanks{Work partially supported by the Research Program of University Jaume I Project P1-1B2012-18, and DGI -MINECO grant (FEDER) MTM2013-48371-C2-2-P}
\author[I. Gozalbo]{Irmina Gozalbo}
\email{igozalbo@gmail.com}

\keywords{End of revolution, Parabolicity, Stochastic Completeness, Euclidean space, Sphere, Hyperbolic space}
\subjclass[2010]{Primary 53C20 53C40; Secondary 53C42}
\begin{abstract}
In this paper we consider the conformal type (parabolicity or non-parabolicity) of complete ends of revolution immersed in simply connected space forms of constant sectional curvature. We show that any complete end of revolution in the $3$-dimensional Euclidean  space or in the $3$-dimensional sphere is parabolic. In the case of ends of revolution in the hyperbolic $3$-dimensional space, we find sufficient conditions to attain parabolicity for complete ends of revolution using their relative position to the complete flat surfaces of revolution.
\end{abstract}
\maketitle
\setcounter{tocdepth}{1}
\tableofcontents
\section{Introduction.}

Let $\Sigma$ be a complete and non-compact surface. Let $D\subset \Sigma$ be an open precompact subset of $\Sigma$ with smooth boundary. An end $E$ of $\Sigma$ with respect to $D$ is a connected unbounded component of $\Sigma\setminus D$. An end $E$ is \emph{parabolic} (\cite{Meeks-Perez-2,Li2000, GriExp}) if every bounded harmonic function on $E$ is determined by its boundary values. 

This paper is concerned with the study of the conformal type (parabolicity or non-parabolicity) of complete ends of revolution immersed in the $3$-dimensional Euclidean space $\RT$, in the $3$-dimensional hyperbolic space $\HT$, or in the $3$-dimensional sphere $\ST$. Let us denote by $\MT$ the simply connected space form of constant sectional curvature $\kappa\in \mathbb{R}$. Hence, $\mathbb{M}^3(1)=\ST$, $\mathbb{M}^3(0)=\RT$, $\mathbb{M}^3(-1)=\HT$. An end of a complete surface in $\MT$ is a \emph{complete end of revolution} if  there exists a geodesic in $\MT$ such that the end is invariant by the group of rotations of $\MT$ that leaves this geodesic point-wise fixed. More precisely, an end of revolution will be the rotation along a geodesic ray $\gamma$ of $\MT$ of a generating smooth curve $\beta:[0,\infty)\to\mathbb{M}^2(\kappa)$ contained in a totally geodesic hypersurface  $\mathbb{M}^2(\kappa)$  where the ray $\gamma$ belongs. In order to guarantee the smoothness and that the end is the end of a complete surface  we require that the generating curve $\beta$ be regular, with infinite longitude and  does not intersect the geodesic ray $\gamma$. 

The conformal type of a Riemannian manifold has been largely studied. In particular in \cite{Troya99} sufficient and necessary conditions for the parabolicity of a manifold with a warped cylindrical end were provided, in \cite{GriExp} rotationally symmetric manifolds were analyzed, and in the examples of \cite{HP,MP3} certain surfaces of revolution in $\RT$ have been studied from an extrinsic approach. Our first result characterizes the conformal type of complete ends of revolution in $\RT$ or in $\ST$

\begin{theoremA}\label{teounua}
Any complete end of revolution in $\RT$, or in $\ST$, is a parabolic end.
\end{theoremA}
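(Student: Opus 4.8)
The plan is to reduce the problem to the classical integral test for the parabolicity of a rotationally symmetric end, and then to control the warping function in each of the two ambient geometries. First I would fix coordinates adapted to the rotation. Parametrize the generating curve by arc length, $\beta:[0,\infty)\to\mathbb{M}^2(\kappa)$, which is legitimate because $\beta$ has infinite length; write $v(t)>0$ for the distance in $\mathbb{M}^2(\kappa)$ from $\beta(t)$ to the axis $\gamma$, positive since $\beta$ never meets $\gamma$. When $\beta$ is rotated about $\gamma$, the orbit of $\beta(t)$ is a circle of length $2\pi\,\phi_\kappa(v(t))$, where
\[
\phi_\kappa(v)=
\begin{cases}
\sin v, & \kappa=1,\\
v, & \kappa=0,\\
\sinh v, & \kappa=-1.
\end{cases}
\]
Since the meridians are isometric copies of $\beta$ and meet the orbit circles orthogonally, the induced metric on the end is the warped product
\[
g=dt^2+\phi_\kappa\big(v(t)\big)^2\,d\theta^2,\qquad (t,\theta)\in[0,\infty)\times(\mathbb{R}/2\pi\mathbb{Z}).
\]

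Second, I would invoke the standard model criterion for such ends (see \cite{GriExp,Troya99}). Writing $f(t)=\phi_\kappa(v(t))$, the functions of $t$ alone that are harmonic for the Laplacian $\Delta=\frac{1}{f}\partial_t(f\partial_t\,\cdot)+\frac{1}{f^2}\partial_\theta^2$ are exactly the primitives of $c/f$. Thus $\int^\infty dt/f$ converges if and only if the end carries a bounded non-constant harmonic function vanishing at infinity (the capacity potential $u(t)=\int_t^\infty ds/f(s)$), and consequently the end is parabolic if and only if
\[
\int^\infty \frac{dt}{\phi_\kappa(v(t))}=\infty.
\]

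Finally I would verify this divergence in each case. For $\ST$ one has $\phi_1(v)=\sin v\le 1$, hence $\int^\infty dt/\sin v(t)\ge\int^\infty dt=\infty$, since $t$ ranges over all of $[0,\infty)$; the boundedness of the warping function settles the spherical case at once. For $\RT$ one has $\phi_0(v)=v$, and since $v$ is a distance function ($|\nabla v|=1$) and $\beta$ is unit speed, $|v'(t)|\le 1$, which yields the linear bound $v(t)\le v(0)+t$; therefore $\int^\infty dt/v(t)\ge\int^\infty dt/(v(0)+t)=\infty$.

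The genuine content, and the only delicate point, is the geometric reduction identifying the warping functions $\phi_\kappa$ together with the observation that they are controlled---bounded by $1$ on $\ST$ and of at most linear growth on $\RT$---so that the integral test diverges with no further hypotheses on $\beta$. This also explains why the hyperbolic case is excluded from the statement: there $\phi_{-1}(v)=\sinh v$ grows without bound, the integral test can fail, and the additional conditions developed later in the paper become necessary.
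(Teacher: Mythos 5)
Your proposal is correct and follows essentially the same route as the paper: reduce the end to a warped product $dt^2+w(t)^2\,d\theta^2$ over the arc-length parameter of the profile curve, invoke the criterion that such an end is parabolic when $\int^\infty dt/w=\infty$ (the paper's Corollary \ref{cor2.14} and Proposition \ref{prop2.16}), and then bound the warping function --- linearly in $\RT$ via $|w'|\le 1$, and by a constant in $\ST$. The only difference is presentational: you identify the warping function intrinsically as $\phi_\kappa(v)$ with $v$ the distance to the axis, so the spherical bound is the one-line $\sin v\le 1$, whereas the paper works in the stereographic model with $w_1=2\gamma_1/(1+\gamma_1^2+\gamma_2^2)$ and reaches the same constant lower bound on $1/w_1$ by a two-case argument.
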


The conformal classification of  ends of revolution in $\HT$ becomes more complicated. In the hyperbolic space there is no restriction on the conformal type of ends of revolution. Actually, in Section \ref{ex} we will show examples of parabolic and non-parabolic ends in $\HT$. In the half space model of the hyperbolic space,
\begin{equation}
\HT: = \left\{ (x_1,x_2,x_3) \in \RT\,:\,x_3>0 \right\},\quad g_{\HT}= \frac{1}{x_3^2} \left(dx_1^2 +  dx_2^2+ dx_3^2  \right),
\end{equation}
we can provide a sufficient condition for parabolicity using a $c$-{\em cone}. Given $c\in\mathbb{R}_+$, a $c$-cone is the rotation along the $z_3$ axis of curve
\begin{equation}
\beta:(0,\infty)\to \HT,\quad \beta(t)=(t,0,ct).
\end{equation}
\begin{figure}
   \centering
   \includegraphics[scale=0.25]{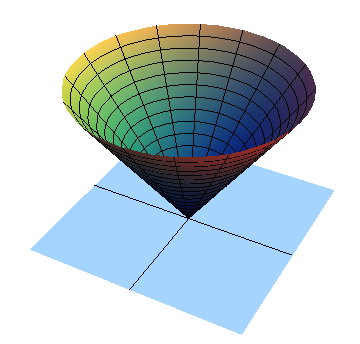}\quad\includegraphics[scale=0.25]{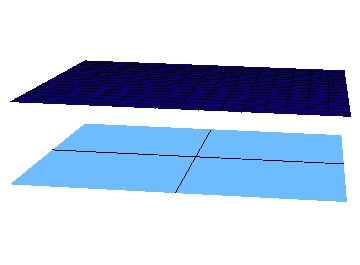}\quad\includegraphics[scale=0.25]{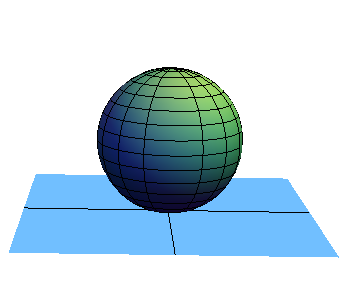}
   \caption{The $c$-cone (or the set of points at a fixed distance from a geodesic) and the horosphere are the only complete flat surfaces immersed in $\HT$. Everyone of these flat surfaces divides $\HT$ in two parts. If an end of revolution is contained on a $c$-cone, or on (inside) a horosphere (horoball), the end is parabolic.}
   \label{cone}
\end{figure}Any $c$-cone divides the hyperbolic space $\HT$ in two parts (see figure \ref{cone}). One part on the $c$-cone and the other part down the $c$-cone. Using this partition property of the $c$-cones we can state the following Theorem

\begin{theoremA}\label{teoendh3}
Let $E$ be a complete end of revolution in $\HT$. Suppose that the end $E$ is contained on  a $c$-cone for some $c>0$. Then, the end $E$ is parabolic.
\end{theoremA}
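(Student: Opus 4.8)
The plan is to reduce the parabolicity of the end to a one–dimensional integral test by writing the induced metric of an end of revolution in Fermi coordinates about its axis, and then to use the containment hypothesis to bound the length of the parallels.

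First I would set up coordinates. Let $\gamma$ be the axis geodesic and introduce Fermi coordinates $(\rho,t,\theta)$ about $\gamma$ in $\HT$, in which the hyperbolic metric reads $g_{\HT}=d\rho^2+\cosh^2\!\rho\,dt^2+\sinh^2\!\rho\,d\theta^2$, where $\rho$ is the distance to $\gamma$, $t$ is arclength along $\gamma$, and $\theta\in[0,2\pi)$ is the rotation angle. The generating curve lies in the totally geodesic $\mathbb{M}^2(\kappa)$ containing $\gamma$, i.e. $\theta\equiv 0$, so parametrizing it by its own arclength $r\in[r_0,\infty)$ (the end has infinite longitude) gives $\beta(r)=(\rho(r),t(r),0)$ with $(\rho')^2+\cosh^2\!\rho\,(t')^2=1$. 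Rotating and computing the induced metric then yields
\begin{equation}
ds^2=dr^2+\sinh^2\!\rho(r)\,d\theta^2,
\end{equation}
so the end is isometric to the model (warped cylindrical) end with warping function $\phi(r)=\sinh\rho(r)$, whose parallel at parameter $r$ has length $2\pi\sinh\rho(r)$.

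Next I would invoke the standard parabolicity criterion for such rotationally symmetric ends (see \cite{GriExp, Troya99}): the end is parabolic if and only if $\int_{r_0}^{\infty}\frac{dr}{\sinh\rho(r)}=\infty$. For completeness this can be seen directly by a capacity computation: testing with radial functions $f=f(r)$ one gets $\int_E|\nabla f|^2\,dA=2\pi\int f'(r)^2\sinh\rho(r)\,dr$, and minimizing subject to $f(r_0)=1$, $f(R)=0$ shows that the capacity of the boundary circle against the sphere $\{r=R\}$ equals $2\pi\bigl(\int_{r_0}^R \tfrac{dr}{\sinh\rho(r)}\bigr)^{-1}$, which tends to $0$ as $R\to\infty$ exactly when the integral diverges.

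It remains to exploit the hypothesis. A $c$-cone is a tube of constant radius about $\gamma$: along $\beta(t)=(t,0,ct)$ the ratio $x_1/x_3$ is constant, so a short computation in the half–space model shows that every point of the cone lies at one and the same distance $\rho_c=\rho_c(c)$ from the $x_3$–axis, and the cone partitions $\HT$ into the region $\{\rho<\rho_c\}$ (toward the axis) and $\{\rho>\rho_c\}$. The hypothesis that $E$ is contained on the $c$-cone means $E$ lies in the closed region nearer the axis, hence $\rho(r)\le\rho_c$ for every $r\ge r_0$. Consequently $\sinh\rho(r)\le\sinh\rho_c$ is uniformly bounded, and
\begin{equation}
\int_{r_0}^{\infty}\frac{dr}{\sinh\rho(r)}\ \ge\ \frac{1}{\sinh\rho_c}\int_{r_0}^{\infty}dr\ =\ \infty,
\end{equation}
so the integral test is satisfied and $E$ is parabolic. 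The main obstacle I anticipate is the geometric translation in this last step: verifying that a $c$-cone is genuinely equidistant from the axis and, above all, that ``contained on the $c$-cone'' forces the uniform bound $\rho(r)\le\rho_c$ regardless of any monotonicity of the generating curve. Once that bound is secured the analytic part is immediate, since a uniformly bounded warping function over an infinite arclength interval always yields a divergent integral and hence parabolicity.
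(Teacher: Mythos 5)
Your argument is correct and is essentially the paper's proof in different coordinates: the paper works in the half-space model, where the warping function is $w_{-1}(s)=\gamma_1(s)/\gamma_2(s)$ and the cone hypothesis gives $\gamma_2/\gamma_1\ge c$ directly, while your Fermi-coordinate warping function $\sinh\rho(r)$ is literally the same quantity (one checks $\sinh\rho = x_1/x_3$ for the distance $\rho$ to the $x_3$-axis, so $\sinh\rho_c=1/c$). Both proofs then conclude by the same integral test $\int dr/w = \infty$ for a warping function bounded above by $1/c$ over an infinite arclength interval.
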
 

An other sufficient condition can be provided using horospheres

\begin{theoremA}\label{slice}
Let $E$ be a complete end of revolution in $\HT$. Suppose that the end $E$ is contained  on  the horosphere $\{x_3=z\}$  for some $z>0$. Then, the end $E$ is parabolic.
\end{theoremA}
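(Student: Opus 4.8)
The plan is to reduce the statement to the classical one–dimensional criterion for parabolicity of a rotationally symmetric end, and then to force the relevant integral to diverge using the hypothesis $x_3\ge z$. First I would place the configuration in standard position. Since $E$ is invariant under the whole group of rotations fixing its axis $\gamma$ and lies in the horoball bounded by $\{x_3=z\}$, the geodesic $\gamma$ must have the point at infinity of the half–space model as one of its endpoints: otherwise those rotations would carry this horoball onto infinitely many distinct horoballs, and $E$, being contained in all of them, would be bounded, contradicting completeness. Hence, after an ambient isometry, $\gamma$ is the $x_3$–axis, the generating curve lies in the totally geodesic plane $\{x_2=0\}$, and it can be written as $\beta(t)=(r(t),0,h(t))$ with $r(t)>0$ and, by hypothesis, $h(t)\ge z$ for all $t$.

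Next I would compute the intrinsic geometry of $E$. Parametrizing the surface of revolution by $\Psi(t,\phi)=(r\cos\phi,r\sin\phi,h)$ and using $g_{\HT}=x_3^{-2}(dx_1^2+dx_2^2+dx_3^2)$, one checks that the coordinate fields are orthogonal and that
\eq
g_E=\frac{(r')^2+(h')^2}{h^2}\,dt^2+\frac{r^2}{h^2}\,d\phi^2=ds^2+f(s)^2\,d\phi^2,
\eeq
where $ds=h^{-1}\sqrt{(r')^2+(h')^2}\,dt$ is hyperbolic arclength along $\beta$ and $f=r/h$ (equivalently $f=\sinh\varrho$, with $\varrho$ the distance to $\gamma$). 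Thus $E$ is an $SO(2)$–symmetric end; its radial harmonic functions solve $(f u')'=0$, whence $u'=c/f$, so $E$ is parabolic if and only if $\int^{\infty} ds/f=\infty$ (see \cite{GriExp}). Returning to the parameter $t$,
\eq
\int^{\infty}\frac{ds}{f}=\int^{\infty}\frac{h}{r}\,\frac{\sqrt{(r')^2+(h')^2}}{h}\,dt=\int^{\infty}\frac{\sqrt{(r')^2+(h')^2}}{r}\,dt,
\eeq
and the whole problem comes down to showing that this last integral diverges.

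Here is where the hypothesis is used, and the argument is by contradiction. If the integral were finite then, since $|r'|/r\le\sqrt{(r')^2+(h')^2}/r$, the total variation $\int^{\infty}|(\log r)'|\,dt$ would be finite, so $\log r$ would be bounded and $r\le b$ for some constant $b$. The Euclidean length of $\beta$ would then satisfy $\int^{\infty}\sqrt{(r')^2+(h')^2}\,dt\le b\int^{\infty}\sqrt{(r')^2+(h')^2}/r\,dt<\infty$, and because $h\ge z$ the hyperbolic length would satisfy $\int^{\infty}h^{-1}\sqrt{(r')^2+(h')^2}\,dt\le z^{-1}\int^{\infty}\sqrt{(r')^2+(h')^2}\,dt<\infty$, contradicting that $\beta$ has infinite length. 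Hence $\int^{\infty} ds/f=\infty$ and $E$ is parabolic. The same computation with $h\equiv z$ covers the borderline case in which $E$ lies exactly on the flat horosphere.

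The step I expect to be the real obstacle is the normalization in the first paragraph — showing that the axis has to be vertical, so that the profile reduces to a pair $(r,h)$ with $h\ge z$; once this is in hand, the metric computation and the rotationally symmetric criterion are routine and the divergence is elementary. It is worth emphasizing that the bound $h\ge z$ enters exactly once, in passing from finite Euclidean length to finite hyperbolic length, and this is precisely the geometric feature that singles out the horoball $\{x_3\ge z\}$ and would fail on the opposite side $\{x_3\le z\}$.
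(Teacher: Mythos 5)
Your proof is correct and follows essentially the same route as the paper's: the bounded-variation step showing that finiteness of $\int ds/f$ forces $r=\gamma_1$ to be bounded is exactly the paper's Proposition \ref{limitedx} (there phrased as: a non-parabolic end of revolution has $\sup\gamma_1<\infty$), and your final passage from finite Euclidean length to finite hyperbolic length via $h\ge z$ is the paper's inequality $\int \gamma_2/\gamma_1\,ds\geq z\int ds/\gamma_1\geq (z/\sup\gamma_1)\int ds=\infty$ read in the opposite direction. The only caveat concerns your normalization paragraph: the assertion that containment in a bounded set would contradict completeness is false in $\HT$ (complete ends of revolution can lie in a compact set --- this is exactly the situation of Corollary \ref{corendH3} and of the clothoid example), but the step is moot because the paper's definition of an end of revolution in the half-space model already takes the axis to be the $x_3$-axis, so the horosphere $\{x_3=z\}$ is by convention centred at the ideal endpoint of that axis.
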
 
By using the above Theorem we can characterize the conformal type of ends of revolution immersed inside of a compact set of the hyperbolic space.
\begin{corollaryA}\label{corendH3}
Let $E$ be a complete end of revolution in $\HT$ contained in a compact set of $\HT$. Then, $E$ is a parabolic end.
\end{corollaryA}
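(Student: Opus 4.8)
The plan is to reduce the statement to Theorem~\ref{slice}: I will show that the compactness hypothesis forces the end $E$ to lie inside a horoball of the form $\{x_3\ge a\}$, after which parabolicity is immediate from the slice theorem. The whole argument is a matter of translating ``contained in a compact set'' into ``contained in a horoball based at the ideal point $\infty$''.

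First I would normalize the position of $E$. Since the isometry group of $\HT$ acts transitively on pointed geodesic rays, and since both compactness and parabolicity are preserved under isometries, I may assume without loss of generality that the geodesic ray $\gamma$ about which $E$ revolves is a subray of the positive $x_3$-axis; the rotations defining $E$ are then the Euclidean rotations about that axis, which is precisely the configuration covered by Theorem~\ref{slice}, whose distinguished horospheres $\{x_3=z\}$ are exactly the level sets left invariant by these rotations.

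Next I would extract from compactness the containment in a horoball. In the half-space model the height function $p\mapsto x_3(p)$ is tied to the distance to the ideal boundary: the function $u=\log x_3$ satisfies $|\nabla u|_{\HT}=1$, so it is $1$-Lipschitz and hence $\bigl|\log x_3(p)-\log x_3(p_0)\bigr|\le d_{\HT}(p_0,p)$ for any fixed base point $p_0$. Consequently $d_{\HT}(p_0,p)\to\infty$ as $x_3(p)\to 0$, so a set on which $x_3$ is not bounded away from $0$ cannot be contained in any metric ball. As $E$ lies in a compact set $K$, and $K$ lies in some ball $B_R(p_0)$, there is a constant $a>0$ with $x_3\ge a$ on $K$; therefore $E\subset K\subset\{x_3\ge a\}$, i.e.\ $E$ lies inside the horoball bounded by the horosphere $\{x_3=a\}$, which is exactly the hypothesis of Theorem~\ref{slice}.

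With $E$ contained in this horoball, Theorem~\ref{slice} applies verbatim and yields that $E$ is parabolic, completing the argument. I expect the only substantive step to be the middle one, namely checking that compactness in the hyperbolic metric keeps $E$ away from the totally geodesic ideal boundary $\{x_3=0\}$; the normalization of the axis and the final invocation of Theorem~\ref{slice} are then routine. Note that compactness in fact also bounds $x_3$ from above, but this extra information is not needed, as containment in the horoball already suffices.
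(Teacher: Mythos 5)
Your proposal is correct and follows the same route the paper intends: the corollary is stated as an immediate consequence of Theorem~\ref{slice}, obtained by observing that a compact subset of $\HT$ is bounded away from the ideal plane $\{x_3=0\}$, so $E\subset\{x_3\ge a\}$ for some $a>0$ and the slice theorem applies. Your Lipschitz estimate for $\log x_3$ is a valid (if slightly more elaborate than necessary) way to justify that step --- simply noting that the continuous function $x_3$ attains a positive minimum on the compact set would also do.
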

Moreover, theorem  \ref{slice} allow us to know that complete non-parabolic ends of revolution in $\HT$ approaches to the $\{x_3=0\}$ plane.
\begin{corollaryA}\label{corendhyp}
Let $E$ be a complete and non-parabolic end of revolution in $\HT$. Then, 
$$
\inf_{p\in E}x_3(p)=0.
$$
\end{corollaryA}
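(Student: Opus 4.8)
The plan is to argue by contraposition from Theorem \ref{slice}. Since $\HT$ is realized here as the half space $\{x_3>0\}$, the coordinate function $x_3$ is strictly positive on $E$, so $\inf_{p\in E}x_3(p)$ is a well-defined nonnegative real number; what must be ruled out is the possibility that it is strictly positive. I would therefore suppose, toward a contradiction, that $m:=\inf_{p\in E}x_3(p)>0$. By definition of the infimum as the greatest lower bound, every point of $E$ satisfies $x_3(p)\ge m$, so $E$ is contained in the horoball $\{x_3\ge m\}$.

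Next I would translate this into the exact hypothesis of Theorem \ref{slice}. Choosing any level $z$ with $0<z<m$, the inequality $x_3(p)\ge m>z$ holds for all $p\in E$, so $E$ lies on the inner (horoball) side of the horosphere $\{x_3=z\}$ — precisely the containment required by Theorem \ref{slice}. Applying that theorem yields that $E$ is parabolic, which contradicts the standing assumption that $E$ is non-parabolic. The contradiction forces $m=0$, giving the desired conclusion $\inf_{p\in E}x_3(p)=0$.

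The only point that genuinely requires attention is the passage between the analytic condition $\inf_{p\in E}x_3(p)>0$ and the geometric containment hypothesis of Theorem \ref{slice}: one must verify that a strictly positive infimum really does place the whole end on the horoball side of some horosphere $\{x_3=z\}$, and that the level $z$ may be taken strictly below $m$ so that the containment is exactly of the type covered by the theorem (as indicated by the partition of $\HT$ in Figure \ref{cone}). Since the statement is a direct contrapositive of an already established result, I do not anticipate any substantive obstacle beyond this bookkeeping; the substance of the argument is entirely contained in Theorem \ref{slice}.
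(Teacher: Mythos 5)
Your argument is correct and is exactly the route the paper intends: the corollary is stated as an immediate consequence of Theorem \ref{slice}, obtained by contraposition just as you do (if $\inf_{p\in E}x_3(p)=m>0$, then $E$ lies on the horosphere $\{x_3=z\}$ for any $0<z\le m$, hence is parabolic). The only cosmetic remark is that you may take $z=m$ directly, since the proof of Theorem \ref{slice} only uses the inequality $\gamma_2(s)\ge z$.
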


The conformal type of a surface is related with the transcience or recurrence of the Brownian motion. The simplest way to construct Brownian motion on a surface is to construct first the heat kernel which will serve as the density of the transition probability.

Given a surface $\Sigma$ with Laplacian $\triangle$, the heat kernel on $\Sigma$ is a function $p(t,x,y)$ on $(0,\infty)\times \Sigma\times \Sigma$ which is the minimal positive fundamental solution to the heat equation
\begin{equation}
\frac{\partial v}{\partial t}=\triangle v.
\end{equation}
In other words, the Cauchy problem
\begin{equation}
\left\{
\begin{aligned}
\frac{\partial v}{\partial t}&=\triangle v\\
v\vert_{t=0}&=v_0(x)
\end{aligned}
\right.
\end{equation}
has solution
\begin{equation}
v(x,t)=\int_{\Sigma}p(t,x,y)v_0(y)dA(y),
\end{equation}
where $dA$ is the volume element of $\Sigma$. The Brownian motion on a surface is called recurrent if it visits any open set at arbitrarily large moments of time with probability $1$, and transcient otherwise. The Brownian motion on $\Sigma$ is transcient (see \cite{GriExp,Ahl,Khas}) if $$
\int_1^{\infty} p(t,x,x)dt < \infty
$$ Otherwise, the Brownian motion on  $\Sigma$ is recurrent. 
Given a complete and non-compact surface $\Sigma$ and an open precompact set $D\subset \Sigma$, the Brownian motion is recurrent, if and only if, every end of $\Sigma$ with respect to $D$ is parabolic. 
 
Another property of the Brownian motion to be considered in this paper is stochastic completeness. This is a property of a stochastic process to have infinite lifetime. In other words, a process is stochastically complete if the total probability of the particle being found in the state space is constantly equal to $1$. For the Brownian motion this means
\begin{equation}
\int_\Sigma p(t,x,y)dA(y)=1,
\end{equation}
for any $t>0$. Namely, the heat kernel is an authentic probability measure. For complete ends of revolution we can state 

\begin{theoremA}\label{teoE} Let $\Sigma$ be complete and non-compact surface of finite topological type immersed in $\MT$ with $\kappa\in\mathbb{R}$. Suppose that there exists a compact subset $\Omega\subset \Sigma$ of $\Sigma$ such that every end of $\Sigma$ with respect to $\Omega$ is an end of revolution  in $\MT$. Then, $\Sigma$  is stochastically complete. \end{theoremA}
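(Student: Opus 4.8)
The plan is to deduce stochastic completeness from an at-most-exponential bound on the volume growth of $\Sigma$ and then to invoke Grigor'yan's volume test: a complete Riemannian manifold for which
\begin{equation*}
\int^{\infty}\frac{r\,dr}{\log \operatorname{vol}B(x_0,r)}=\infty,
\end{equation*}
where $B(x_0,r)$ is the intrinsic geodesic ball of radius $r$ about a fixed point $x_0$, is stochastically complete (see \cite{GriExp}). Thus it is enough to show $\operatorname{vol}B(x_0,r)\le C\,e^{ar}$ for some constants $C,a>0$.

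Since $\Sigma$ has finite topological type, $\Sigma\setminus\Omega$ has only finitely many unbounded components $E_1,\dots,E_k$, each of which is an end of revolution. I would fix one such end $E$, generated by a unit-speed curve $\beta\colon[0,\infty)\to\mathbb{M}^2(\kappa)$ rotated about a geodesic of $\MT$, and write the induced metric in the warped-product form
\begin{equation*}
ds^2=dt^2+\psi(t)^2\,d\theta^2,\qquad \psi(t)=s_\kappa\bigl(r(t)\bigr),
\end{equation*}
where $\theta$ is the rotation angle, $r(t)$ is the distance in $\MT$ from $\beta(t)$ to the axis, and $s_\kappa(\rho)$ equals $\rho$, $\sin\rho$, or $\sinh\rho$ according as $\kappa=0,1,-1$; indeed the orbit of a point at distance $\rho$ from the axis is a circle of length $2\pi\, s_\kappa(\rho)$.

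The distance-to-axis function has unit gradient away from the axis, so along the unit-speed generator $|r'(t)|\le 1$ and hence $r(t)\le r(0)+t$. Feeding this into $\psi=s_\kappa\circ r$ would give that $\psi$ grows at most linearly when $\kappa=0$, is bounded when $\kappa=1$, and satisfies $\psi(t)\le\sinh\bigl(r(0)+t\bigr)\le C\,e^{t}$ when $\kappa=-1$; in every case $\psi(t)\le C\,e^{t}$. On the other hand, since $ds^2\ge dt^2$ the coordinate $t$ is $1$-Lipschitz on $E$, so every point of $E$ within intrinsic distance $r$ of $x_0$ has $t\le r+c_0$ for some constant $c_0$. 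Consequently
\begin{equation*}
\operatorname{vol}\bigl(B(x_0,r)\cap E\bigr)\le\int_{0}^{r+c_0}2\pi\,\psi(t)\,dt\le C'\,e^{r}.
\end{equation*}
Summing over the finitely many ends and adding $\operatorname{vol}(\Omega)$ would yield $\operatorname{vol}B(x_0,r)\le C''e^{r}$, so that $\log\operatorname{vol}B(x_0,r)=O(r)$, the volume-test integral diverges, and $\Sigma$ is stochastically complete.

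The step needing the most care is the warped-product normal form together with the verification that $r(t)$ is genuinely $1$-Lipschitz, i.e.\ the regularity of the tube distance to the axis, which is where the hypothesis that $\beta$ never meets the axis is used. For $\kappa\ge 0$ one can bypass the volume estimate entirely: by Theorem~\ref{teounua} each end is parabolic, so $\Sigma$ is recurrent and therefore stochastically complete. The substantive case is $\kappa=-1$, where the ends may be non-parabolic, yet the exponential volume bound above still forces stochastic completeness.
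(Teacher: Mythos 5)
Your proof is correct, but it takes a genuinely different route from the paper. The paper splits into cases: for $\RT$ and $\ST$ it deduces stochastic completeness from parabolicity (Theorem \ref{teounua}), while for $\HT$ it runs a dichotomy argument on the sets $I_\pm=\{s:\gamma_2(s)/\gamma_1(s)\gtrless c\}$, using the differential inequality $\dot w_{-1}/w_{-1}\le 1/w_{-1}+1$ to show that either $\int^\infty ds/w_{-1}(s)=\infty$ (parabolicity) or $\int^\infty\bigl(\int_\rho^s w_{-1}\bigr)/w_{-1}(s)\,ds=\infty$; in the latter case it builds a radial $1$-superharmonic exhaustion function on each end and concludes via the Khas'minskii-type criterion (Proposition \ref{prop2.15}). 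You instead prove a uniform exponential volume bound and invoke Grigor'yan's volume test, which handles all three curvatures at once. Your key input, $w_\kappa(t)=s_\kappa(r(t))$ with $r$ the $1$-Lipschitz distance to the axis, is consistent with the paper's explicit formulas (e.g.\ $\sinh(\operatorname{dist to axis})=\gamma_1/\gamma_2=w_{-1}$ in the half-space model), and in fact the bound $w_{-1}(t)\le(w_{-1}(0)+1)e^{t}$ follows directly from the paper's own inequality $\dot w_{-1}\le 1+w_{-1}$ by Gronwall, so you could bypass the geometric identification entirely. Two minor points to tidy up: state that after enlarging $\Omega$ one may assume $\Sigma\setminus\Omega$ has no bounded components (finite topological type guarantees finitely many ends, and the leftover precompact pieces have finite area), and note that the lower bound $t(p)\le d_\Sigma(x_0,p)$ holds because any path from $x_0$ into $E$ must cross $\{t=0\}$ and $t$ is $1$-Lipschitz for the induced metric. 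What your approach buys is uniformity and brevity; what the paper's buys is sharper information in the hyperbolic case (an explicit superharmonic exhaustion on each end, which also feeds into Theorem \ref{horoteo}).
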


Hoffman and Meeks proved in \cite{MeHo1990} that a properly immersed minimal surface in $\mathbb{R}^3$ disjoint from a plane is a plane. Otherwise stated, if $M$ is a minimal surface properly immersed in $\mathbb{R}^3$ and for some $c>0$, $M\cap\{z>c\}\neq \emptyset$, then either $M\cap\{z=c\}\neq \emptyset$, or $M$ is a plane parallel to $\{z=c\}$. 

For the hyperbolic space, Rodriguez and Rosenberg proved in \cite{Rod98} that every  constant mean curvature one surface $M$, properly embedded in a horoball  $B\subset\HT$ such that, $M\cap \partial B=\emptyset$, is a horosphere.    As a surprising corollary of Theorem \ref{teoE} we obtain
\begin{theoremA}\label{horoteo}Let $M$ be a complete non-compact surface of revolution properly immersed in $\HT$. Suppose $M\cap B\neq \emptyset$ for some horoball $B\subset \HT$ then,
\begin{enumerate}
\item if $M$ has negative sectional curvature, $M\cap \partial B\neq \emptyset$ (otherwise stated, $M$ touches the horosphere $\partial B$).
\item If $M$ has constant non-positive sectional curvature and $M\cap \partial B=\emptyset$, $M$ is a horosphere.
\item If $M$ has constant mean curvature with $\Vert \vec H\Vert\leq 1$, and $M\cap \partial B=\emptyset$, $M$ is a horosphere.
\end{enumerate}
\end{theoremA}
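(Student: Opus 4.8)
The plan is to deduce all three items from the stochastic completeness supplied by Theorem \ref{teoE}, together with the parabolicity of ends contained in horoballs from Theorem \ref{slice}. First I would record the two analytic inputs. By Theorem \ref{teoE} the surface $M$ is stochastically complete, so it satisfies the weak maximum principle at infinity: for every $u\in C^2(M)$ bounded below there is a sequence $\{p_k\}$ with $u(p_k)\to\inf_M u$ and $\liminf_k\triangle u(p_k)\ge 0$. Moreover, since $M$ is connected and $M\cap B\neq\emptyset$, the situation to exclude is $M\subset B$; in that case every end of $M$ lies in $B$ and is parabolic by Theorem \ref{slice}, so $M$ itself is parabolic and every superharmonic function on $M$ that is bounded below is constant.

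Next I would reduce to a horoball centered at an endpoint of the axis. Placing the axis $\gamma$ as the vertical geodesic in the half-space model, the rotations of $M$ become the Euclidean rotations $\rho$ about the $x_3$-axis, so $M\subset B$ gives $M\subset\bigcap_\rho\rho(B)$. If the center of $B$ is not an endpoint of $\gamma$, the centers $\rho(\xi)$ sweep a circle in $\partial_\infty\HT$; the closure of $\bigcap_\rho\rho(B)$ meets $\partial_\infty\HT$ in the intersection of the corresponding one-point sets, which is empty, so $\bigcap_\rho\rho(B)$ is bounded, forcing $M$ to be compact — a contradiction. Hence it suffices to treat $B=\{x_3>z_0\}$, whose Busemann function is $b=\log x_3$, and a direct computation gives $|\nabla b|=1$, $\triangle_{\HT}b=-2$ and $\mathrm{Hess}_{\HT}\,b=-(g_{\HT}-db\otimes db)$. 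Restricting to $M$ with unit normal $N$ and (trace) mean curvature vector $\vec{\mathcal H}$ yields the master identity
\begin{equation}
\triangle_M b=-1-\langle\nabla b,N\rangle^2+\langle\nabla b,\vec{\mathcal H}\rangle,
\end{equation}
which vanishes identically on any horosphere centered at $\infty$.

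For item (3) I would write $\eta=\langle\nabla b,N\rangle\in[-1,1]$ and $\vec{\mathcal H}=\mathcal H N$; the hypothesis $\|\vec H\|\le1$ means $|\mathcal H|\le2$, so by the arithmetic--geometric mean inequality $\langle\nabla b,\vec{\mathcal H}\rangle=\mathcal H\eta\le2|\eta|\le1+\eta^2$, and the master identity gives $\triangle_M b\le0$. Thus $b=\log x_3$ is superharmonic on $M$ and, since $M\subset B$, bounded below; parabolicity forces $b$ constant, so $x_3$ is constant and completeness makes $M$ the whole horosphere. For item (2), the warped-product formula $K_M=-\phi''/\phi$ for a surface of revolution (with $\phi$ the rotational warping) shows that $K_M\equiv c<0$ produces exponentially growing $\phi$, whose end has $\int^\infty ds/\phi<\infty$ and is therefore non-parabolic, contradicting Theorem \ref{slice}; hence $c=0$, and among the complete flat surfaces of revolution (horospheres, $c$-cones, equidistant tubes) only the horosphere stays inside a horoball, the others reaching the ideal boundary $\{x_3=0\}$.

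The remaining item (1) is where I expect the main difficulty, since there is no pointwise bound on $\vec{\mathcal H}$ and the superharmonicity trick is unavailable; instead I would use the Gauss equation $K_M=-1+\det\mathrm{II}$. At an interior local minimum of $x_3$ the surface lies locally above, and is tangent to, the horosphere through that point, so $\mathrm{II}_M\ge\mathrm{II}_{\text{hor}}$ for the common upward normal; hence $\det\mathrm{II}\ge1$ and $K_M\ge0$, impossible under $K_M<0$. Thus $x_3|_M$ has no interior minimum, so if $M\subset B$ the infimal height $z^*=\inf_M x_3\ge z_0$ is approached only along an end, where $x_3=h(s)\to z^*$ from above. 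Analyzing the profile ODE (unit speed $r'^2+h'^2=h^2$, $\phi=r/h$, $K_M=-\phi''/\phi$), an end whose height decreases to a positive limit forces $\phi''<0$, i.e.\ $K_M>0$ near infinity, again contradicting $K_M<0$. Hence $M$ cannot lie strictly inside $B$, so $M\cap\partial B\neq\emptyset$. Conceptually it is stochastic completeness — the maximum principle at infinity of Theorem \ref{teoE} — that licenses replacing the missing interior extremum by a tangency at infinity; making this replacement rigorous, and ruling out the borderline asymptotics $K_M\to0^-$, is the crux of the argument.
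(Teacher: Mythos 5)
Your route is genuinely different from the paper's, and part of it works. The paper disposes of all three items in one stroke by combining Theorem \ref{teoE} with the curvature estimates of Bessa--de~Lira--Pigola--Setti \cite{Pacelli2013} (a stochastically and geodesically complete surface properly immersed in a horoball satisfies $\sup\Vert\vec H\Vert\ge 1$ and $\sup K_G\ge 0$), and then quotes \cite{Gal2000} for the flat case and \cite{Rod98} for the cmc case. Your treatment of item (3) replaces both \cite{Pacelli2013} and \cite{Rod98} by a direct argument --- the Busemann identity $\triangle_M b=-1-\eta^2+\mathcal{H}\eta$ together with the fact that a superharmonic function bounded below on a parabolic surface is constant, parabolicity being supplied by Theorem \ref{slice} --- and that argument is correct and more self-contained than the paper's. (You should add that a hyperbolic isometry commuting with the rotations swaps the two ideal endpoints of the axis, so the reduction to $B=\{x_3>z_0\}$ is legitimate for a horoball centered at the other endpoint as well.)

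The genuine gaps are in items (1) and (2), and they are not merely technical. For (2) with $K\equiv c<0$ you claim $\phi''=|c|\phi$ forces exponential growth of the warping function and hence a non-parabolic end; but this ODE also has the decaying solution $\phi=Ce^{-\sqrt{|c|}\,s}$, for which $\int^\infty ds/\phi=\infty$ and the end is parabolic, so no contradiction with Theorem \ref{slice} arises. This case really occurs: the upper end $E^+$ of the cylinder in Section \ref{ex} has $w_{-1}(s)=(b/c)e^{-s}$, hence $K\equiv-1$, is contained in $\{x_3\ge c\}$, and is parabolic. The same example shows that item (1) cannot be obtained end-by-end from parabolicity at all --- a negatively curved parabolic end of revolution inside a horoball exists --- so some global input on $M$ is indispensable; you set up the weak maximum principle at infinity in your first paragraph but never actually apply it, and you concede yourself that the crux of (1), ruling out the borderline asymptotics, is unresolved. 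The missing step in both places is precisely the global estimate $\sup_M K_G\ge 0$ of \cite{Pacelli2013} for the stochastically complete $M$ provided by Theorem \ref{teoE}, which is what the paper's proof uses.
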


\subsection{outline of the paper}
The structure of the paper is as follows

In Section \ref{pre} we introduce the definitions of complete end of revolution and we study the relation with the isometry and isotropy group of $\MT$. In Theorem \ref{theoprelim} and corollary \ref{cor2.9} we prove that any complete end of revolution can be considered as a submanifold smoothly immersed in $\MT$, and intrinsically each end of revolution is endowed with a warped product metric. Indeed, in Corollary \ref{cor2.11} is proved that any end of revolution is isometric to a rotationally symmetric $2$-dimensional manifold where a geodesic ball is subtracted. That allow us, by using the well known criteria for parabolicity of rotationally symmetric model manifolds, to obtain Theorem \ref{suficient} and Corollary \ref{cor2.14} where sufficient and necessary conditions for the parabolicity in terms of the warped function of each end of revolution are provided. In Subsection \ref{confmodel} making use of conformal models of $\MT$ we obtain the explicit expressions of such warping functions. With these techniques we can prove Theorems \ref{teounua}, \ref{teoendh3}, \ref{slice}, \ref{teoE} and \ref{horoteo} in Sections \ref{proofA}, \ref{proofB}, \ref{proofC}, \ref{proofE}, and \ref{proofF} respectively. Finally, Section \ref{ex} deals with several examples of application of the main Theorems.

\section{Preliminaries.} \label{pre}
\subsection{Isotropy group and Ends of revolution in $\MT$}The only (up to scaling) $3$-dimensional simply connected Riemannian manifolds with a $6$-dimensional isometry group are:
\begin{enumerate}
\item The Euclidean space $\RT$ with vanishing curvature.
\item The hyperbolic space $\HT$ with constant sectional curvature on each tangent plane $\kappa_{\HT}=-1$.
\item The sphere $\ST$ with constant sectional curvature on each tangent plane $\kappa_{\ST}=1$.
\end{enumerate}
In this paper we denote by $\MT$, the simply connected space form of constant sectional curvature $\kappa\in \mathbb{R}$. On each point $p\in \MT$ the isotropy subgroup ${\rm stab}(p)$ of the isometry group at $p$ is $O(3)$, actually, see \cite{Petersen} for instance, $\RT=\left(\mathbb{R}^3\rtimes O(3)\right)/O(3)$, $\ST=O(4)/O(3)$, $\HT=O(1,3)/O(3)$. 

Given a point $p\in \MT$, and an unit vector $v\in T_p\MT$. We will denote by $\gamma_v$ the geodesic curve starting at $p$ with direction $v$, namely
\begin{equation}
\gamma_v:\mathbb{R}\to \MT,\quad t\to {\rm exp}_p(tv).
\end{equation} 
Since any isometry $\varphi \in {\rm stab}(p)$ fixes $p$, the pushfordward $\varphi_*:T_p\MT\to T_p\MT$ induces an automorphism in $T_p\MT$. That allow us to obtain the faithful linear isotropy representation $\rho:{\rm stab(p)}\to {\rm GL}(T_p\MT)$. Let us define
\begin{equation}
\mathcal{R}_v:=\left\{\varphi \in {\rm stab}(p)\, :\, \varphi_*(v)=v,\, {\rm det}(\varphi_*)=1\right\}. 
\end{equation}
If we choose the orthonormal basis $\{v, E_1, E_2\}$ of $T_p\MT$, for any $\varphi\in \mathcal{R}_v$ there exists $\theta\in [0,2\pi)$ such that
\begin{equation}\label{matrix}
\rho(\varphi)=\begin{pmatrix}
1&0&0\\
0&\cos(\theta)&-\sin(\theta)\\
0&\sin(\theta)&\cos(\theta)
\end{pmatrix}
\end{equation}
Then $\mathcal{R}_v$ is a Lie group isomorphic (and diffeormorphic) to $SO(2)$ and can be understood as a rotation along $\gamma_v$ because for any $\varphi\in\mathcal{R}_v$ we have $\varphi\circ\gamma_v(t)=\varphi(\exp(vt))=\exp(\varphi_*(v)t)=\exp(v t)=\gamma_v(t)$. We can show not only that the points of $\gamma_v$ are left fixed by the action of $\mathcal{R}_v$, but actually that $\gamma_v$ is the set of fixed points of $\mathcal{R}_v$. In other words,
\begin{proposition}
Let $p$ be a point of $\MT$, let $v$ be a vector in $T_p\MT$. Then, $\mathcal{R}_v$ acts freely on $\MT\setminus (\gamma_v(\mathbb{R}))$.
\end{proposition}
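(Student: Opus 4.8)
The plan is to transport the whole problem to the tangent space $T_p\MT$ via the exponential map, where the group $\mathcal{R}_v$ acts through its faithful linear representation $\rho$, and then reduce freeness to the elementary observation that a nontrivial planar rotation fixes only the axis. Two ingredients drive the argument. The first is the equivariance of $\exp_p$ under isometries fixing $p$, namely $\varphi\circ\exp_p=\exp_p\circ\varphi_*$ for every $\varphi\in{\rm stab}(p)$; this is just the ``isometries carry geodesics to geodesics'' identity already used above to check $\varphi\circ\gamma_v=\gamma_v$. The second is the behaviour of $\exp_p$ in a space form: for $\kappa\le 0$ it is a global diffeomorphism $T_p\MT\to\MT$, while for $\kappa>0$ it restricts to a diffeomorphism of the open ball of radius $\pi/\sqrt{\kappa}$ onto $\MT$ with the single antipodal point $-p$ removed, and $-p$ itself lies on $\gamma_v$.

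First I would pin down the fixed vectors of $\rho(\varphi)$ for a nontrivial $\varphi\in\mathcal{R}_v$. In the orthonormal basis $\{v,E_1,E_2\}$ the matrix \eqref{matrix} is block diagonal, with a planar rotation by $\theta$ in the $E_1,E_2$ plane; since $\rho$ is faithful, $\varphi\neq\mathrm{id}$ forces $\theta\not\equiv 0$, and then that $2\times 2$ block has no eigenvalue $1$. Hence the fixed subspace of $\rho(\varphi)$ is exactly the line $\mathbb{R}v$, and correspondingly $\exp_p(\mathbb{R}v)=\gamma_v(\mathbb{R})$.

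Then I would establish freeness directly: assuming some $\varphi\in\mathcal{R}_v$ fixes a point $q\in\MT\setminus\gamma_v(\mathbb{R})$, I must conclude $\varphi=\mathrm{id}$. Because $q\notin\gamma_v(\mathbb{R})$ we have in particular $q\neq -p$, so I may write $q=\exp_p(w)$ with $w$ in the region where $\exp_p$ is injective, and $q\notin\gamma_v(\mathbb{R})$ gives $w\notin\mathbb{R}v$. Equivariance yields $\exp_p(\varphi_*w)=\varphi(q)=q=\exp_p(w)$; since $\varphi_*\in O(3)$ preserves $\lVert w\rVert$, both $w$ and $\varphi_*w$ lie in the injectivity region, and injectivity forces $\varphi_*w=w$. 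If $\varphi\neq\mathrm{id}$ this would place $w$ in $\mathrm{Fix}(\rho(\varphi))=\mathbb{R}v$, contradicting $q\notin\gamma_v(\mathbb{R})$; hence $\varphi=\mathrm{id}$, which is precisely freeness on $\MT\setminus\gamma_v(\mathbb{R})$.

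The one point I would treat with care, and the only genuine obstacle, is the positively curved case: unlike $\RT$ and $\HT$, the map $\exp_p$ on $\ST$ is not injective, so I must confine $w$ to the ball of radius $\pi/\sqrt{\kappa}$ and note separately that the single point thereby excluded, the antipode $-p=\gamma_v(\pi/\sqrt{\kappa})$, already belongs to $\gamma_v(\mathbb{R})$ and so lies outside the set on which freeness is asserted. Once this is observed, the injectivity step goes through uniformly and the remainder is routine.
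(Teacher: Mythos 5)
Your proof is correct, and it takes a genuinely different route from the paper's. The paper invokes Theorem 5.1 of Kobayashi's \emph{Transformation groups in differential geometry} to know that the components of the fixed-point set of $\mathcal{R}_v$ are closed totally geodesic submanifolds, uses Corollary 5.2 of the same reference (via cut points) to rule out extra components when $\kappa>0$, and then derives a contradiction from the linear isotropy representation \eqref{matrix} if the single component containing $\gamma_v(\mathbb{R})$ had dimension $\geq 2$. You bypass the totally-geodesic machinery entirely: equivariance $\varphi\circ\exp_p=\exp_p\circ\varphi_*$ plus the (near-)injectivity of $\exp_p$ in a space form transports the question to $T_p\MT$, where it reduces to the observation that a nontrivial planar rotation block has no eigenvalue $1$. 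Your handling of the sphere --- confining $w$ to the ball of radius $\pi/\sqrt{\kappa}$ and noting that the one excluded point, the antipode, already lies on $\gamma_v(\mathbb{R})$ --- plays the same role as the paper's appeal to cut points, but in a more hands-on way. Your argument is more elementary and self-contained for space forms (it would not generalize to manifolds without such explicit control of $\exp_p$, which is what Kobayashi's theorem buys the paper), and it has the additional merit of establishing freeness directly element by element: you show $\mathrm{Fix}(\varphi)\subseteq\gamma_v(\mathbb{R})$ for \emph{each} nontrivial $\varphi$, whereas the paper's wording computes the common fixed set of the whole group, which is a priori the weaker statement (though its argument applies verbatim to each single nontrivial element).
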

\begin{proof}
We are going to prove that the set of fixed points for $\mathcal{R}_v$ is precisely $\gamma_v(\mathbb{R})$.  Observe that, by using Theorem 5.1 of \cite{KobaTrans}, each connected component of the set of fixed points of $\mathcal{R}_v$ is a closed totally geodesic submanifold of $\MT$. Moreover, we can deduce that the set of fixed points has only one connected component $C_0\ni p$. Because the connected component $C_0$ of the set of fixed points which contains $p$  contains $\gamma_v(\mathbb{R})$ as well, and it therefore contains the cut points of $p$ (if $\kappa>0$). But by  corollary 5.2 of \cite{KobaTrans} any other connected component besides $C_0$ of the set of fixed points should be formed by cut points of $p$ (which belong to $C_0$). Hence, we conclude that the set of fixed points of  $\mathcal{R}_v$ has only one connected totally geodesic submanifold which contains $p$ and $\gamma_v(\mathbb{R})$. 

We can prove now that $C_0=\gamma_v(\mathbb{R})$. Because otherwise since $C_0$ is a totally geodesic submanifold, we should have an other vector $v_2\in T_p\MT$ non proportional to $v$, such that for the geodesic $\gamma_{v_2}(t)=\exp_p(v_2t)$ 
\begin{equation}
\varphi\circ \gamma_{v_2}(\mathbb{R})=\gamma_{v_2}(\mathbb{R}),\quad \forall \varphi\in \mathcal{R}_v.
\end{equation} 
 But, since the geodesic curve $\varphi\circ\gamma_{v_2}$ at $p$ is tangent to $\varphi_*(v_2)$, that means that
\begin{equation}
\varphi_*({v_2})=v_2,\quad \forall \varphi\in \mathcal{R}_v.
\end{equation} 
Set $v_2=v_2^1v+v_2^2E_1+v_2^2E_3$ the decomposition of $v_2$ in the basis $\{v,E_1,E_2\}$, hence by using the representation given in (\ref{matrix}),
\begin{equation}
\begin{pmatrix}
1&0&0\\
0&\cos(\theta)&-\sin(\theta)\\
0&\sin(\theta)&\cos(\theta)
\end{pmatrix}
\begin{pmatrix}
v_2^1\\
v_2^2\\
v_2^3
\end{pmatrix}
=\begin{pmatrix}
v_2^1\\
v_2^2\\
v_2^3
\end{pmatrix},\quad \forall \theta\in[0,2\pi).
\end{equation} 
Namely, $v_2^2=v_2^3=0$, and hence, $v_2=v_1$, but that is a contradiction because we have assumed ${\rm dim}\langle\{v,v_2\}\rangle=2$.
\end{proof}

The above proposition implies that the canonical projection
$
\pi :\MT\setminus \gamma_v(\mathbb{R})\to  (\MT\setminus \gamma_v(\mathbb{R}))/\mathcal{R}_v
$
induces a $\mathcal{R}_v$-principal fiber bundle (see \cite{Koba1} for instance). Hence,  for any $q\in \MT\setminus \gamma_v(\mathbb{R})$ the orbit space 
\begin{equation}
O_q:=\left\{\varphi(q)\, :\,\varphi\in\mathcal{R}_v\right\}
\end{equation}
is a smooth submanifold of $\MT\setminus \gamma_v(\mathbb{R})$ and
$
O_q\overset{\rm diff.}{\approx}\pi^{-1}(\pi(q))\overset{\rm diff.}{\approx}SO(2)\overset{\rm diff.}{\approx}\mathbb{S}^1
$. Given $p\in\MT$, $v\in T_p\MT$ and $E_1\in T_p\MT$ with $E_1\perp v$, let us define the totally geodesic half-plane $\Pi_{v,E_1}^+$ by
\begin{equation}
\Pi_{v,E_1}^+:=\{\exp_p(vt_1+E_1t_2)\,:\,t_1\in \mathbb{R}, t_2>0 \}
\end{equation}
Observe that for any $\varphi \in \mathcal{R}_v$,
$$
\varphi(\Pi^+_{v,E_1})=\Pi^+_{v,\varphi_*(E_1)}
$$
because $\varphi$ is an isometry and hence commutes with the exponential map.

\begin{definition}[End of revolution]Given a point $p\in T_p\MT$, two perpendicular vectors $v,w\in T_pM$ of length $1$, and given a curve $\beta:[0,\infty)\to \Pi_{v,w}^+$. An end of revolution $E$ along $\gamma_v$ with generating curve $\beta$ is the set
\begin{equation}
E:=\left\{\varphi(\beta(t))\, :\, t\in [0,\infty)\, {\rm and }\,\varphi\in\mathcal{R}_v\right\}
\end{equation}
\end{definition}

In the following Theorem we shall prove that any end of revolution given by the above definition can be understood as a smooth submanifold with boundary immersed in $\MT$, moreover such an end is intrinsically a warped product.

\begin{theorem}\label{theoprelim}
Given $p\in \MT$, $v\in T_p\MT$. Let $f:\mathbb{S}^1\to\mathcal{R}_v $ be a diffeomorphism. Then, given a smooth and regular curve $\beta:[0,\infty)\to\Pi^+_{v,w}$, for some $w\in T_p\MT$ with $\langle w, v\rangle=0$, the map
\begin{equation}
\alpha:[0,\infty)\times \mathbb{S}^1\to \MT,\quad (t,\theta)\to\alpha(t,\theta)=f(\theta)\big(\beta(t)\big)
\end{equation}
is an immersion. Moreover, there exists a diffeomorphism $\widetilde f:\mathbb{S}_1\to\mathbb{S}_1$  and a positive function $w:\mathbb{R}^1\to\mathbb{R}^1$ such that
\begin{equation}
\alpha^*(g_{\MT})=\Vert \dot\beta\circ\pi_1\Vert^2\pi_1^*g_{\mathbb{R}^1}+\left(w\circ\pi_1\right)^2\left(\widetilde f\circ\pi_2\right)^*g_{\mathbb{S}_1}
\end{equation}
where $\pi_1$ and $\pi_2$ are the projections 
\begin{equation*}
\pi_1:[0,\infty)\times \mathbb{S}_1\to [0,\infty),\quad \pi_2:[0,\infty)\times \mathbb{S}_1\to \mathbb{S}_1
\end{equation*}
and $g_{\mathbb{R}^1}$, $g_{\mathbb{S}_1}$ are the canonical metrics of $\mathbb{R}^1$ and $\mathbb{S}_1$ respectively.
\end{theorem}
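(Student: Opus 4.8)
The plan is to work on the deleted space $\MT\setminus\gamma_v(\mathbb{R})$, where by the preceding Proposition the group $\mathcal{R}_v\cong SO(2)$ acts freely, and to exploit the infinitesimal generator of this action. Let $X$ denote the Killing field on $\MT$ whose flow is the one-parameter group $\{\varphi_\phi\}\subset\mathcal{R}_v$ determined by the rotation angle $\phi$ in (\ref{matrix}); its zero set is exactly $\gamma_v(\mathbb{R})$, and $X$ is $\mathcal{R}_v$-invariant because $SO(2)$ is abelian (for $\psi\in\mathcal{R}_v$ the field $\psi_*X$ generates $\psi\varphi_\phi\psi^{-1}=\varphi_\phi$, so $\psi_*X=X$). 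Writing $f(\theta)=\varphi_{\tilde f(\theta)}$ defines a map $\tilde f:\mathbb{S}^1\to\mathbb{S}^1$ which is a diffeomorphism since $f$ is one and $\phi\mapsto\varphi_\phi$ is a Lie group isomorphism. With this notation the two coordinate velocities of $\alpha(t,\theta)=f(\theta)(\beta(t))$ are
\begin{equation}
\partial_t\alpha = f(\theta)_*\dot\beta(t), \qquad \partial_\theta\alpha=\tilde f'(\theta)\,X_{\alpha(t,\theta)},
\end{equation}
the second by the chain rule applied to the flow of $X$.

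First I would show $\alpha^*g_{\MT}$ has no cross term, i.e. that $\langle\partial_t\alpha,\partial_\theta\alpha\rangle=0$. Since $f(\theta)$ is an isometry and $X$ is $\mathcal{R}_v$-invariant, $\langle f(\theta)_*\dot\beta, X_{\alpha}\rangle=\langle\dot\beta,(f(\theta)^{-1})_*X_\alpha\rangle=\langle\dot\beta, X_{\beta(t)}\rangle$, so it suffices to check $\dot\beta\perp X$ along $\beta$. As $\beta$ takes values in the totally geodesic half-plane $\Pi_{v,w}^+$, the vector $\dot\beta(t)$ is tangent to $\Pi_{v,w}$, and the essential point is that $X$ is everywhere normal to $\Pi_{v,w}$ along $\Pi_{v,w}$. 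This I would establish with the geodesic reflection $R:=\rho^{-1}(\mathrm{diag}(1,1,-1))\in\mathrm{stab}(p)$ across $\Pi_{v,w}$: it fixes $\Pi_{v,w}$ pointwise (it commutes with $\exp_p$ and fixes $\mathrm{span}(v,w)$), satisfies $R^2=\mathrm{id}$, and conjugates the rotations as $R\varphi_\phi R^{-1}=\varphi_{-\phi}$; differentiating the flow at a fixed point then gives $R_*X_q=-X_q$ for every $q\in\Pi_{v,w}$. Since $R_*$ fixes $T_q\Pi_{v,w}$ and has determinant $-1$, its $(-1)$-eigenspace is exactly the normal line, forcing $X_q\perp T_q\Pi_{v,w}$.

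With orthogonality in hand the remaining identities are immediate from invariance. The isometry $f(\theta)$ gives $\langle\partial_t\alpha,\partial_t\alpha\rangle=\Vert\dot\beta(t)\Vert^2$, which is the first warped term $\Vert\dot\beta\circ\pi_1\Vert^2\pi_1^*g_{\mathbb{R}^1}$. For the second, $\Vert X\Vert$ is constant along each $\mathcal{R}_v$-orbit, so $\Vert X_{\alpha(t,\theta)}\Vert=\Vert X_{\beta(t)}\Vert=:w(t)$ depends only on $t$; since $\beta(t)\notin\gamma_v(\mathbb{R})$ we have $w(t)>0$, and $\langle\partial_\theta\alpha,\partial_\theta\alpha\rangle=w(t)^2\tilde f'(\theta)^2$, which is precisely $(w\circ\pi_1)^2(\tilde f\circ\pi_2)^*g_{\mathbb{S}_1}$. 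These three computations assemble into the stated warped-product expression. Finally $\alpha$ is an immersion, because $\partial_t\alpha\neq0$ (as $f(\theta)_*$ is injective and $\beta$ is regular) and $\partial_\theta\alpha\neq0$ (as $\tilde f'(\theta)\neq0$ and $X_{\alpha}\neq0$ off the axis), and these two vectors are orthogonal, hence linearly independent. I expect the only genuine obstacle to be the normality $X\perp\Pi_{v,w}$; everything else is bookkeeping with the isometric action.
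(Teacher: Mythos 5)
Your argument is correct, and it reaches the warped--product form by a genuinely different route from the paper. The paper proves the vanishing of the cross term (its ``Case II'') by introducing the two $\mathcal{R}_v$-invariant functions $f_1(x)=\langle \exp_p^{-1}(x),\exp_p^{-1}(x)\rangle$ and $f_2(x)=\langle v,\exp_p^{-1}(x)\rangle$, observing that the orbits are perpendicular to $\nabla f_1$ and $\nabla f_2$, and then checking via the Gauss lemma that these two gradients span the tangent plane of $\Pi^+_{v,W}$; it then handles the fiber direction (``Case III'') by pulling the metric back to $\mathcal{R}_v$, showing it is left-invariant on $SO(2)$ and hence round up to a scale factor $w^2(t_0)$. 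You instead work with the Killing field $X$ generating the $SO(2)$-action: orthogonality reduces to $X\perp\Pi_{v,w}$, which you obtain from the geodesic reflection $R$ across the totally geodesic plane (using $R\varphi_\phi R^{-1}=\varphi_{-\phi}$, so $R_*X=-X$ along the fixed plane, and the $(-1)$-eigenspace of $R_*$ at a fixed point is exactly the normal line), and the fiber metric comes for free from the $\mathcal{R}_v$-invariance of $\Vert X\Vert$, which also identifies the warping function concretely as $w(t)=\Vert X_{\beta(t)}\Vert>0$ off the axis. Your route is shorter and avoids the Gauss-lemma computation entirely, at the cost of invoking the full isotropy group $O(3)$ to produce the reflection and the abelianness of $SO(2)$ for the invariance of $X$; the paper's route is more pedestrian but stays entirely within the exponential-map framework it has already set up. Both proofs use, at the same spot, that $\beta$ avoids the axis (you need it for $X_{\alpha}\neq 0$ and $w>0$), and both implicitly fix an angular coordinate on $\mathbb{S}^1$ to speak of $\widetilde f'(\theta)$; neither point is a gap.
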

\begin{proof}
First of all we have to prove that ${\rm rank}(\alpha_*)=2$. Given $(t_0,\theta_0)\in [0,\infty)\times \mathbb{S}_1$,  the tangent space $T_{(t_0,\theta_0)}[0,\infty)\times \mathbb{S}_1$ can be decomposed as
\begin{equation}\label{decom}
T_{(t_0,\theta_0)}[0,\infty)\times \mathbb{S}_1=T_{t_0}[0,\infty)\, \oplus\, T_{\theta_0}\mathbb{S}_1.
\end{equation}

For any $x\in T_{t_0}\mathbb{R}^1$,  $\alpha_*(x)$ is tangent to the plane $\Pi^+_{v,f(\theta_0)_*(w)}$, because 
\begin{equation}
\begin{aligned}
\alpha_*(x)=&\left.{\frac{d}{dt}}\alpha(t_0+xt,\theta_0)\right\vert_{t=0}=\left.{\frac{d}{dt}}f(\theta_0)\left(\beta(t_0+xt)\right)\right\vert_{t=0}
\end{aligned}
\end{equation}
and since $\beta$ is a curve in $\Pi^+_{v,w}$, then $f(\theta_0)\beta$ is a curve in $\Pi^+_{v,f(\theta_0)_*(w)}$. Moreover,
$$
\alpha_*(x)=f(\theta_0)_*\left(x\dot\beta(t_0)\right)
$$
then $\alpha_*(x)\neq 0$ if $x\neq 0$ because the generating curve $\beta$ is regular ($\dot \beta\neq 0$). When we consider $y\in T_{\theta_0}\mathbb{S}_1$, then $\alpha_*(y)$ is tangent to the orbit space $O_{\alpha(t_0,\theta_0)}$ because for $\gamma:(-\epsilon,\epsilon)\to\mathbb{S}^1$, with $\gamma(0)=\theta_0,\,\dot\gamma(0)=y$,
\begin{equation}
\begin{aligned}
\alpha_*(y)=&\left.{\frac{d}{dt}}\alpha(t_0,\gamma(t))\right\vert_{t=0}=\left.{\frac{d}{dt}}f(\gamma(t))\left(\beta(t_0)\right)\right\vert_{t=0}\in T_{\alpha(\theta_0,t_0)}O_{\alpha(\theta_0,t_0)}.
\end{aligned}
\end{equation}
Moreover, the curve $\gamma$ induces the $1$-parametric subgroup $f(\gamma(t))=\exp(f_*(y)t)f(\theta_0)$ and its action induces a never vanishing vector on $\alpha(t_0,\theta_0)$ whenever $y\neq 0$ because the action of $\mathcal{R}_v$ is freely on $\MT\setminus \gamma_v( \mathbb{R})$ (see proposition 4.1 of \cite{Koba1}). Then $\varphi$ in an immersion.

In order to deal with the induced metric $\varphi^*(g_{\MT})$ observe that  by using the decomposition of equation (\ref{decom}) we have only three cases

\item [{\bfseries Case I: two horizontal vectors}], Suppose $x\in T_{t_0}[0,\infty)$ then, 
\begin{equation}
\begin{aligned}
\alpha^*\left(g_{\MT}\right)(x, x)=&\left\langle \alpha_*(x),\alpha_*(x)\right\rangle_{\MT}\\=&\left\langle f(\theta_0)_*\left(x\dot\beta(t_0)\right),f(\theta_0)_*\left(x\dot\beta(t_0)\right)\right\rangle_{\MT}\\=&\Vert \dot \beta(t_0)\Vert^2\Vert x\Vert^2\\
\end{aligned}
\end{equation}

\item [{\bfseries Case II: one horizontal and one vertical vector}] if $x\in T_{t_0}[0,\infty)$ and $y\in T_{\theta_0}\mathbb{S}_1$, then  
$$\alpha^*\left(g_{\MT}\right)(x,y)=\left\langle \alpha_*(x),\alpha_*(y)\right\rangle_{\MT}=0$$ because 
\begin{proposition}
The orbit space $O_{\alpha(t_0,\theta_0)}$  is perpendicular to $\Pi^+_{v,f(\theta_0)_*w}$.
\end{proposition}
\begin{proof}
Since ${\rm Cut}(p)\cap \left(\MT\setminus \gamma_v\left(\mathbb{R}\right)\right)=\emptyset$, for any $q\in \MT\setminus \gamma_v(\mathbb{R})$ we have a well defined
\begin{equation}
v(q):=\exp_p^{-1}(q).
\end{equation}
To show that $\Pi^{+}_{v,f(\theta_0)_*w}$ is perpendicular to $O_{\alpha(t_0,\theta_0)}$ let us consider the following basis  $\{{\exp_p}_*(v),{\exp_p}_*(W),{\exp_p}_*(\nu)\}$ of $T_{\alpha(t_0,\theta_0)}\MT$, where here we have used $W:=f(\theta_0)_*(w)$ in order to simplify the notation, and $\{v,W,\nu\}$ is an orthonormal basis of $ T_p\MT$ and ${\exp_p}_*$ is the differential of the exponential map, namely
\begin{equation}
\begin{aligned}
{\exp_p}_*(v)&=\left.\frac{d}{dt}\exp_p\left(v(\alpha(t_0,\theta_0))+vt\right)\right\vert_{t=0}\\
{\exp_p}_*(W)&=\left.\frac{d}{dt}\exp_p\left(v(\alpha(t_0,\theta_0))+Wt\right)\right\vert_{t=0}\\
{\exp_p}_*(\nu)&=\left.\frac{d}{dt}\exp_p\left(v(\alpha(t_0,\theta_0))+\nu t\right)\right\vert_{t=0}\\
\end{aligned}
\end{equation} 
hence, ${\exp_p}_*(v)$ and ${\exp_p}_*(W)$ are tangent to $\Pi^{+}_{v,W}$ and moreover, by using the Gauss lemma (see \cite{DoCarmo2}), ${\exp_p}_*(\nu)\perp \Pi^{+}_{v,W}$ and $\{{\exp_p}_*(v),{\exp_p}_*(W),{\exp_p}_*(\nu)\}$ is an orthonormal basis of $T_{\alpha(t_0,\theta_0)}\MT$. Now, let us consider the following two functions
$$
\begin{aligned}
&f_1:\MT\setminus \gamma_v\left(\mathbb{R}\right)\to\mathbb{R},\quad f_1(x)=\langle \exp_p^{-1}(x),\exp_p^{-1}(x)\rangle\\
&f_2:\MT\setminus \gamma_v\left(\mathbb{R}\right)\to\mathbb{R},\quad f_2(x)=\langle v,\exp_p^{-1}(x)\rangle
\end{aligned}
$$
since for any $\varphi\in\mathcal{R}_v$,
 $$
\begin{aligned}
f_1(\varphi(x))=&\langle \exp_p^{-1}\varphi(x),\exp_p^{-1}\varphi(x)\rangle\\=&\langle \varphi_*\exp_p^{-1}(x),\varphi_*\exp_p^{-1}\varphi(x)\rangle=\langle \exp_p^{-1}(x),\exp_p^{-1}(x)\rangle\\
=&f_1(x)\\
{\rm and},&\\
f_2(\varphi(x))=&\langle v,\exp_p^{-1}(\varphi(x))\rangle=\langle v,\varphi_*\exp_p^{-1}(x)\rangle\\=&\langle \varphi_*v,\varphi_*\exp_p^{-1}(x)\rangle=\langle v,\exp_p^{-1}(x)\rangle\\
=&f_2(x)
\end{aligned}
$$
then, $O_{\alpha(t_0,\theta_0)}$ is perpendicular to $\nabla f_1$ and $\nabla f_2$. Now, we are going to show that $\nabla f_1$ and $\nabla f_2$ span $T_{\alpha(t_0,\theta_0)}\Pi^+_{u,W}$. Set $u\in T_p\MT$, and consider the curve 
\begin{equation}
\beta(t)=\exp_p(v(\alpha(t_0,\theta_0))+ut)
\end{equation}
hence, $\beta(0)=\alpha(t_0,\theta_0)$ and $\dot\beta(0)={\exp_p}_*(u)$. Moreover
\begin{equation}
\begin{aligned}
\langle \dot\beta(0),\nabla f_1\rangle=&\frac{d}{dt}\left.f_1\left(\beta\left(t\right)\right)\right\vert_{t=0}=\frac{d}{dt}\left.\left\vert v(\alpha(t_0,\theta_0))+ut\right\vert^2\right\vert_{t=0}=2\langle u, v(\alpha(t_0,\theta_0))\rangle.\\
\langle \dot\beta(0),\nabla f_2\rangle=&\frac{d}{dt}\left.f_2\left(\beta\left(t\right)\right)\right\vert_{t=0}=\frac{d}{dt}\left.\langle v(\alpha(t_0,\theta_0))+ut, v\rangle\right\vert_{t=0}=\langle u, v\rangle.
\end{aligned}
\end{equation}
Then, when we focus on the basis $\{{\exp_p}_*(v),{\exp_p}_*(W),{\exp_p}_*(\nu)\}$ of $T_{\alpha(t_0,\theta_0)}\MT$
\begin{equation}
\begin{aligned}
\begin{pmatrix}
\nabla f_1\\
\nabla f_2
\end{pmatrix}
=\begin{pmatrix}
2\langle v,v(\alpha(t_0,\theta_0))\rangle& 2\langle v(\alpha(t_0,\theta_0)), W\rangle\\
1&0
\end{pmatrix}\cdot
\begin{pmatrix}
{\exp_p}_*(v)\\
{\exp_p}_*(W)
\end{pmatrix}
\end{aligned}
\end{equation}
Finally, Taking into account that $\alpha(t_0,\theta_0)\in \Pi^{+}_{v,W}$ then there exist $t>0$ such that  $\langle v(\alpha(t_0,\theta_0)), W\rangle=t\neq 0$, therefore 
\begin{equation}
\langle\{\nabla f_1,\nabla f_2\}\rangle=\langle\{{\exp_p}_*(v),{\exp_p}_*(W)\}\rangle=T_{\langle v(\alpha(t_0,\theta_0)), W\rangle}\Pi^{+}_{v,W}.
\end{equation}
\end{proof}
\item [{\bfseries Case III: two vertical vectors}.] 
Observe in this case that for every $t_0$ the map $f_1:\mathcal{R}_v\to O_{\beta(t_0)}$ given by $f_1(\theta)=\theta\beta(t_0),
$ 
defines a diffeoromphism from  $\mathcal{R}_v$ to $O_{\beta(t_0)}$. We can pull-back the metric to $O_{\beta(t_0)}$ (an hence to $\mathcal{R}_v$) by using the inclusion $i:O_{\beta(t_0)}$, in such a way that the $1$-dimensional manifold $O_{\beta(t_0)}$ with metric tensor $i^*(g_{\MT})$ is isometric to the $1$-dimensional manifold $\mathcal{R}_v$ with metric tensor $g_R=( i\circ f_1)^*(g_{\MT})$. But we are going to show that the metric $g_R$ is a left invariant metric on $\mathcal{R}_v$. Given $\theta \in \mathcal{R}_v$, let $v,w$ be two vectors of $T_\theta\mathcal{R}_v$, and let $\gamma_1$ and $\gamma_2$ be two curves such that
$$
\gamma_1(0)=\gamma_2(0)=\theta,\quad \dot \gamma_1(0)=v,\quad\dot\gamma_2(0)=w 
$$  
Since $\mathcal{R}_v$ acts by isometries,
$$
\begin{aligned}
g_R({L_\varphi}_* v,{L_\varphi}_* w)_{L_\varphi(\theta)}=&\left\langle \left.\frac{d}{dt}\left(\varphi\gamma_1(t)\right)\right\vert_{t=0},\left.\frac{d}{dt}\left(\varphi\gamma_2(t))\right)\right\vert_{t=0}\right\rangle_{\varphi(\theta (\beta(t_0)))}\\
=&\left\langle\varphi^{-1}_* \left.\frac{d}{dt}(\varphi\gamma_1(t)(q))\right\vert_{t=0},\varphi^{-1}_*\left.\frac{d}{dt}(\varphi\gamma_2(t)(q))\right\vert_{t=0}\right\rangle_{\theta (\beta(t_0))}\\
=&\left\langle\left.\frac{d}{dt}(\varphi^{-1}\varphi\gamma_1(t)(q))\right\vert_{t=0},\frac{d}{dt}(\varphi^{-1}\varphi\gamma_2(t)(q))\vert_{t=0}\right\rangle_{\theta (\beta(t_0))}\\
=&\left\langle\left.\frac{d}{dt}(\gamma_1(t)(q))\right\vert_{t=0},\frac{d}{dt}(\gamma_2(t)(q))\vert_{t=0}\right\rangle_{\theta (\beta(t_0))}\\
=&\langle {f_1}_*(v),{f_1}_*(w)\rangle_{\theta (\beta(t_0))}=g_R(v,w)_\theta.
\end{aligned}
$$ 
But since $\mathcal{R}_v$ is isomorphic (and diffeomorphic) to $SO(2)$ with a left-invariant metric, then $g_R$ is the round metric up to a scale factor. Hence, $\mathcal{R}_v$ is a homotetia of $\mathbb{S}^1$ then
\begin{equation}
\begin{aligned}
\alpha^*\left(g_{\MT}\right)(y,y)=&w^2(t_0)(\widetilde f)^*(g_{\mathbb{S}_1})\left(y,y\right).
\end{aligned}
\end{equation} 
for any $y\in T_{\theta_0}\mathbb{S}_1$.
\end{proof}

\begin{corollary}\label{cor2.9}Under the assumptions of the above Theorem, can be found a local coordinate system $\{t,\theta\}$ of $[0,\infty)\times\mathbb{S}_1$ such that
$$
\alpha^*(g_\MT)=dt^2+w^2(t)d\theta^2.
$$
\end{corollary}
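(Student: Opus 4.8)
The plan is to read off the claimed normal form from the warped-product expression of Theorem~\ref{theoprelim} by means of two successive changes of coordinate, one in the radial factor $[0,\infty)$ and one in the circle factor $\mathbb{S}_1$. In the product coordinates $(t,\theta)$ on $[0,\infty)\times\mathbb{S}_1$, Theorem~\ref{theoprelim} reads
\begin{equation*}
\alpha^*(g_\MT)=\Vert\dot\beta(t)\Vert^2\,dt^2+w^2(t)\,(\widetilde f\circ\pi_2)^*(g_{\mathbb{S}_1}),
\end{equation*}
so the task is only to normalize the coefficient of the radial term to $1$ and to turn the circle term into the flat $d\theta^2$.

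First I would reparametrize the generating curve $\beta$ by arc length. Since an end of revolution requires $\beta$ to be regular, we have $\Vert\dot\beta(t)\Vert>0$ for every $t$, so
\begin{equation*}
s(t):=\int_0^t\Vert\dot\beta(\tau)\Vert\,d\tau
\end{equation*}
is a smooth, strictly increasing function; because $\beta$ is moreover required to have infinite length, $s$ is a diffeomorphism of $[0,\infty)$ onto $[0,\infty)$. As $ds=\Vert\dot\beta(t)\Vert\,dt$, this substitution turns the radial term into $ds^2$. Renaming the arc-length parameter $s$ again as $t$, and understanding $w$ henceforth as the original warping function composed with $s\mapsto t(s)$ (still smooth and positive), the metric becomes $dt^2+w^2(t)\,(\widetilde f\circ\pi_2)^*(g_{\mathbb{S}_1})$.

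Next I would absorb the diffeomorphism $\widetilde f$ by introducing the angular coordinate $\vartheta:=\widetilde f(\theta)$. Because $\widetilde f:\mathbb{S}_1\to\mathbb{S}_1$ is a diffeomorphism, $\vartheta$ is again a legitimate local coordinate on the circle factor, and in it the map $\widetilde f\circ\pi_2$ is simply the projection onto the $\vartheta$-component; hence $(\widetilde f\circ\pi_2)^*(g_{\mathbb{S}_1})=d\vartheta^2$. The metric then reads
\begin{equation*}
\alpha^*(g_\MT)=dt^2+w^2(t)\,d\vartheta^2,
\end{equation*}
and relabeling $\vartheta$ as $\theta$ gives exactly the asserted expression.

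Since the corollary is a pure normalization, there is no genuine difficulty; the only place where the hypotheses are essential is the claim that $s:[0,\infty)\to[0,\infty)$ is onto, which is precisely guaranteed by the regularity and the infinite length of $\beta$ built into the definition of an end of revolution. I would finally remark that $\{t,\theta\}$ is necessarily only a local coordinate system, in agreement with the statement, since $\mathbb{S}_1$ carries no global angular chart.
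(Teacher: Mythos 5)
Your proposal is correct and is precisely the (implicit) argument behind the paper's Corollary \ref{cor2.9}, which the paper states without proof as an immediate consequence of Theorem \ref{theoprelim}: reparametrize the generating curve by arc length to normalize the radial coefficient, and absorb the diffeomorphism $\widetilde f$ into the angular coordinate. Your remark that surjectivity of the arc-length map onto $[0,\infty)$ rests on the regularity and infinite length of $\beta$ required in the definition of an end of revolution is exactly the right point to flag.
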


\subsection{Rotationally symmetric model spaces}
Rotationally symmetric model spaces, are generalized manifolds of revolution using warped products. Let us recall here the following definition of a model space.

\begin{definition}[See {\cite{GW, GriExp, GriBook}}]
A $w-$model space $\mathbb{M}_{w}^{n}$ is a simply-connected $n$-dimensional smooth manifold $\mathbb{M}_w^n$ with a point $o_w\in \mathbb{M}_w^n$ called the \emph{center point of the model space} such that $\mathbb{M}_w^n-\{o_w\}$ is isometric to a smooth warped product with base $B^{1}
= (\,0,\, \Lambda)\,\,\subset\, \mathbb{R}$ (where $\, 0 < \Lambda
\leq \infty$\,), fiber $F^{n-1} = \mathbb{S}^{n-1}_{1}$ ({\it i.e.}, the unit
$(n-1)-$sphere with standard metric), and positive warping function $w:\,
(\,0, \,\Lambda\, ) \to \mathbb{R}_{+}$. Namely the metric tensor $g_{\mathbb{M}_w^n}$ is given by: 
\begin{equation}\label{metric-model}
g_{\mathbb{M}_w^n}=r^*\left(g_{(\,0,\, \Lambda)}\right)+(w\circ\pi)^2\Theta^*\left(g_{\mathbb{S}^{n-1}_{1}}\right)\quad,
\end{equation}
$r: \mathbb{M}_w^n\to (\,0,\, \Lambda)$ and $\Theta: \mathbb{M}_w^n\to \mathbb{S}^{n-1}_{1}$ being the projections onto the factors of the warped product, and $g_{(\,0,\, \Lambda)}$ and $g_{\mathbb{S}^{n-1}_{1}}$ the standard metric tensor on the interval and the sphere respectively. 
\end{definition}

 Despite the freedom in the choice of the $w$ function in the above definition, there exist certain restrictions around $r\to 0$. In order to attain $\mathbb{M}_w^n$, a smooth metric tensor around $o_w$, the positive warping function $w$ should hold the following equalities (see \cite{GW,Petersen}):
\begin{equation}
\begin{aligned}
& w(0) = 0\quad,\\
&w'(0) = 1\quad,\\
& w^{(2k)}(0) = 0\quad,
\end{aligned}
\end{equation}
where $w^{(2k)}(r)$ are the even derivatives of $w$. 

The parameter $\Lambda$ in the above definition is called the \emph{radius of the model space}. If
$\Lambda = \infty$, then $o_{w}$ is a pole of $\mathbb{M}_{w}^{n}$ (\emph{i.e.}, the exponential map $\text{exp}_{o_w}:T_{o_w}\mathbb{M}_w^n\to \mathbb{M}_w^n$ is a diffeomorphism).

Observe that a rotationally symmetric model space $\mathbb{M}_w^n$ is  rotationally symmetric at $o_w\in \mathbb{M}^n_w$ in the sense that the isotropy subgroup at $o_w$ of the isometry group is $O(n)$. More commonly, one regards the functions $(r, \Theta)$ as global coordinate functions on $\mathbb{M}_w^n-\{o_w\}$ and the expression of the metric tensor (\ref{metric-model}) is written as
$
g_{\mathbb{M}_w^n}=dr^2+\left(w(r)\right)^2d\Theta^2
$,
where $dr^2$ denotes the standard metric on the interval and $d\Theta^2$ denotes the standard metric on $\mathbb{S}^{n-1}_1$ ( $d\Theta^2=g_{\mathbb{S}^{n-1}_1}$). In this context, $\{r,\Theta\}$ are called \emph{geodesic polar coordinates} around $o_w$.

In view of the definitions of ends of revolution in $\MT$, Corollary \ref{cor2.9} and the definition of a rotationally symmetric model manifold we can state.

\begin{corollary}\label{cor2.11}
Let $E$ be an end of revolution in $\MT$, let $w$ be the warping function given by corollary \ref{cor2.9}. Then for every  positive radius $\rho>0$ there exists a $W\in C^\infty [0,\infty)$  with
$$
W(x)=w(x-\rho),\quad \forall x\geq \rho
$$
such that $E$ is isometric to $\mathbb{M}^2_W\setminus B_{\rho}(o_W)$  where $B_{\rho}(o_W)$ is the geodesic ball of radius $\rho$ centered at $o_W\in \mathbb{M}_W^2$.
\end{corollary}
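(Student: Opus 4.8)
The plan is to unwind Corollary~\ref{cor2.9} and reduce the statement to a one--dimensional extension problem for the warping function. By Corollary~\ref{cor2.9} the end $E$, with its induced metric, is isometric to the cylinder $[0,\infty)\times\mathbb{S}^1$ equipped with $dt^2+w^2(t)\,d\theta^2$; here $w\in C^\infty[0,\infty)$, and $w$ is strictly positive on $[0,\infty)$ because the generating curve $\beta$ never meets the rotation axis $\gamma_v$ (so the orbits $O_{\beta(t)}$ are genuine circles of positive radius) and smooth because $\beta$ is regular. Thus it suffices to (i)~produce a warping function $W\in C^\infty[0,\infty)$ which satisfies the center--point conditions $W(0)=0$, $W'(0)=1$, $W^{(2k)}(0)=0$ making $\mathbb{M}^2_W$ a genuine smooth model surface, which is positive on $(0,\infty)$, and which obeys $W(x)=w(x-\rho)$ for all $x\ge\rho$; and (ii)~check that with such a $W$ the surface $\mathbb{M}^2_W\setminus B_\rho(o_W)$ is isometric to $E$.

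Granting the existence of $W$, step (ii) is immediate. In geodesic polar coordinates $(r,\Theta)$ on $\mathbb{M}^2_W\setminus\{o_W\}$ the metric is $dr^2+W^2(r)\,d\Theta^2$ with $r\in(0,\infty)$, and the geodesic ball is $B_\rho(o_W)=\{r<\rho\}$, so $\mathbb{M}^2_W\setminus B_\rho(o_W)=\{r\ge\rho\}$. Under the standard identification of the fibers, the map
\[
\Phi:[0,\infty)\times\mathbb{S}^1\to[\rho,\infty)\times\mathbb{S}^1,\qquad \Phi(t,\theta)=(t+\rho,\theta),
\]
is a diffeomorphism carrying the boundary $\{t=0\}$ to $\{r=\rho\}$, and since $W(t+\rho)=w(t)$ for $t\ge 0$ one computes
\[
\Phi^*\bigl(dr^2+W^2(r)\,d\Theta^2\bigr)=dt^2+W^2(t+\rho)\,d\theta^2=dt^2+w^2(t)\,d\theta^2,
\]
which is exactly the metric of $E$; hence $\Phi$ is an isometry.

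The substance of the argument — and the only real obstacle — is step~(i): constructing $W$ so that it simultaneously matches the full (one--sided) jet of $w(\cdot-\rho)$ at $r=\rho$, satisfies the center conditions at $r=0$, and stays positive on $(0,\infty)$. I would first extend $w$ to a function $\hat w$ smooth on $\mathbb{R}$ and agreeing with $w$ on $[0,\infty)$ (Whitney extension, or Borel's lemma applied to the one--sided jet of $w$ at $0$ followed by a cutoff); since $\hat w(0)=w(0)>0$, continuity gives some $\epsilon\in(0,\rho)$ with $\hat w>0$ on $[-\epsilon,\infty)$. Next I would fix a smooth cutoff $\chi:[0,\infty)\to[0,1]$ with $\chi\equiv 0$ on $[0,\rho-\epsilon]$ and $\chi\equiv 1$ on $[\rho,\infty)$, and set
\[
W(r)=\bigl(1-\chi(r)\bigr)\,r+\chi(r)\,\hat w(r-\rho).
\]
Then $W$ is smooth; on $[0,\rho-\epsilon]$ we have $W(r)=r$, which yields $W(0)=0$, $W'(0)=1$ and $W^{(k)}(0)=0$ for $k\ge2$ (in particular all even derivatives vanish); on $[\rho,\infty)$ we have $W(r)=\hat w(r-\rho)=w(r-\rho)$ as required; and on the transition interval $[\rho-\epsilon,\rho]$ the value $W(r)$ is a convex combination of $r>0$ and $\hat w(r-\rho)>0$, hence positive. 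Thus $W>0$ on $(0,\infty)$ and $W$ has all the required properties.

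The main difficulty to keep in mind is therefore purely the smooth--interpolation bookkeeping: one must reconcile the prescribed infinite jet of $w(\cdot-\rho)$ at $\rho$ with the rigid center--point jet $(0,1,0,0,\dots)$ at the origin without losing positivity. The cutoff construction above handles this uniformly for every $\rho>0$; the linear core $W(r)=r$ near the center is merely the simplest choice producing a smooth (flat--disk) metric there, but any function meeting the center conditions would serve equally well.
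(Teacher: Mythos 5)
Your proposal is correct and follows essentially the same route as the paper: both reduce the statement to the one-dimensional problem of interpolating smoothly and positively between the identity function near $0$ (supplying the center-point jet $W(0)=0$, $W'(0)=1$, $W^{(2k)}(0)=0$) and $w(\cdot-\rho)$ on $[\rho,\infty)$, the isometry itself being the obvious shift $(t,\theta)\mapsto(t+\rho,\theta)$ in polar coordinates. The paper performs the interpolation by applying Lee's extension lemma to the function defined on the closed set $[\tfrac{\epsilon}{2},\epsilon]\cup[\rho,\infty)$ with values in $\mathbb{R}_+$, whereas you use a Whitney extension of $w$ plus a cutoff convex combination; your variant is a touch more explicit about why positivity survives on the transition interval, but the two arguments are interchangeable.
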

\begin{proof}
We only have to prove the following lemma
\begin{lemma}\label{lemExt}
Given a positive $\rho>0$ and a positive function $w\in C^\infty[0,\infty)$, the function can be extended to a function $W\in C^{\infty}[0,\infty)$ such that
$$
\left\{
\begin{aligned}
&W(x)=w(x-\rho),\quad \forall x\geq \rho\\
& W(0) = 0\quad,\\
&W'(0) = 1\quad,\\
& W^{(2k)}(0) = 0\quad,\\
&W(x)>0,\quad \forall x>0\quad .
\end{aligned}
\right.
$$  
\end{lemma}
\begin{proof}We can choose $\epsilon>0$ such that $\epsilon<\rho$. We can define moreover the function $F:[\frac{\epsilon}{2},\epsilon]\cup[\rho,\infty)\to\mathbb{R}_+$ given by
$$
F(x):=\left\{
\begin{array}{lcr}
x&{\rm if }&\frac{\epsilon}{2}\leq x\leq\epsilon\\
w(x-\rho)&{\rm if }& x\geq\rho.
\end{array}
\right.
$$ 
Since $F$ is a $C^\infty$ function from the closed set $C:=[\frac{\epsilon}{2},\epsilon]\cup[\rho,\infty)$ of $(0,\infty)$ to $\mathbb{R}_+$ and $F$ has a continuous extension to $(0,\infty)$, then by using the extension lemma for smooth maps (see Corollary 6.27 of \cite{Lee-man}), there exists a $C^\infty$ function $\widetilde F: \mathbb{R}_+\to \mathbb{R}_+$ such that $\widetilde F_{\vert C}=F$. Finally we can define the function $W:[0,\infty)\to [0,\infty)$ by
$$
W(x):=\left\{
\begin{array}{lcr}
x&{\rm if }& x\leq\rho\\
\widetilde F(x)&{\rm if }& x\geq\rho.
\end{array}
\right.
$$ 
\end{proof}
\end{proof}

\subsection{Recurrence and non explosion of ends of revolution}

Conditions for recurrence and non-explosion of the Brownian motion on a Riemannian manifold have been largely studied (see \cite{GriExp} \cite{I1} \cite{I2} \cite{Ahl1}, \cite{Nev} for example). In the particular case of rotationally symmetric model manifolds

\begin{theorem} \label{grig1} \cite{GriExp} Let $\mathbb{M}_w^n$ be a model manifold with $\Lambda=\infty$ (so that $\mathbb{M}_w^n$ is geodesically complete and non-compact). Then $\mathbb{M}_w^n$ is recurrent if and only if
\eq \label{parab}
\int^{\infty} \frac{dt}{w^{n-1}(t)}=\infty
\eeq
\end{theorem}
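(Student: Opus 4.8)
The plan is to exploit the rotational symmetry to collapse the problem to a one-dimensional weighted calculation and then read off recurrence from the divergence of the stated integral. Writing the metric in geodesic polar coordinates as $g=dr^2+w(r)^2d\Theta^2$, the volume element is $w(r)^{n-1}\,dr\,d\sigma$ and the Laplace--Beltrami operator applied to a radial function $u=u(r)$ takes the weighted form
\[
\Delta u=\frac{1}{w^{n-1}}\frac{d}{dr}\left(w^{n-1}\frac{du}{dr}\right).
\]
Recurrence of Brownian motion is equivalent to parabolicity of $\mathbb{M}_w^n$, and parabolicity is equivalent to the vanishing of the capacity of one (hence every) geodesic ball relative to infinity; this is the potential-theoretic characterization sitting behind the heat-kernel criterion recalled earlier. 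So the core of the proof is to compute that capacity explicitly and compare it to the integral $\int^{\infty}w^{1-n}$.

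For the transient direction, suppose $\int^{\infty}w^{1-n}(t)\,dt<\infty$. Then $h(r):=\int_r^{\infty}w^{1-n}(t)\,dt$ is a well-defined, positive, bounded, strictly decreasing function satisfying $w^{n-1}h'\equiv-1$, so $(w^{n-1}h')'=0$ and hence $\Delta h=0$. Thus $h$ is a non-constant bounded harmonic function, whose existence rules out parabolicity and forces transience. For the recurrent direction, suppose $\int^{\infty}w^{1-n}=\infty$. Fix $r_0$ and, for $R>r_0$, solve the radial Dirichlet problem $(w^{n-1}u')'=0$ with $u(r_0)=1$, $u(R)=0$; the solution is
\[
u_R(r)=\left(\int_r^R\frac{dt}{w^{n-1}(t)}\right)\Big/\left(\int_{r_0}^R\frac{dt}{w^{n-1}(t)}\right),
\]
and a direct integration gives its Dirichlet energy as $\omega_{n-1}\left(\int_{r_0}^R w^{1-n}\right)^{-1}$, where $\omega_{n-1}=\mathrm{vol}(\mathbb{S}^{n-1})$. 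A spherical-symmetrization argument (replacing any admissible competitor by its average over the spheres $\{r=\mathrm{const}\}$ does not increase the energy, since the tangential part of the gradient only contributes positively) shows that $u_R$ is the genuine capacity minimizer, so this energy equals $\mathrm{cap}(\overline{B_{r_0}},B_R)$. Letting $R\to\infty$, the divergence of the integral forces $\mathrm{cap}(\overline{B_{r_0}})=0$, i.e. $\mathbb{M}_w^n$ is parabolic and therefore recurrent.

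The main obstacle is entirely in the divergent (recurrent) case, and it has two components that make the capacity computation rigorous: first, verifying that spherical symmetrization truly does not raise the Dirichlet energy, so that the symmetric solution $u_R$ attains the infimum over all admissible functions and not merely among radial ones; second, invoking the standard equivalence between vanishing capacity, the absence of non-constant positive superharmonic functions, and recurrence of Brownian motion as defined through $\int_1^{\infty}p(t,x,x)\,dt$. The transient case, by contrast, is immediate once the explicit bounded harmonic function $h$ is exhibited. An equivalent probabilistic route would observe that the radial process is a one-dimensional diffusion with drift $\tfrac{n-1}{2}\,w'/w$, whose scale function is exactly $s(r)=\int^{r}w^{1-n}$, so transience at $r\to\infty$ coincides with $s(\infty)<\infty$; this reproduces the same dichotomy but relies on Feller's test rather than on capacity.
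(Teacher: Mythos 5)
The paper does not prove this statement at all: Theorem \ref{grig1} is quoted verbatim from Grigor'yan's survey \cite{GriExp} as a known result, so there is no in-paper argument to compare against. Your proposal is, in substance, the standard proof from that reference — reduce to the weighted one-dimensional operator $\tfrac{1}{w^{n-1}}(w^{n-1}u')'$, compute $\mathrm{cap}(\overline{B_{r_0}},B_R)=\omega_{n-1}\bigl(\int_{r_0}^R w^{1-n}\bigr)^{-1}$ via spherical symmetrization, and invoke the equivalence of recurrence, parabolicity, and vanishing capacity — and that part is sound.

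One correction is needed in your transient direction. The function $h(r)=\int_r^{\infty}w^{1-n}(t)\,dt$ is \emph{not} bounded, and is not harmonic on all of $\mathbb{M}_w^n$: since $w(r)\sim r$ near the center, $w^{1-n}$ is non-integrable at $0$ for every $n\ge 2$, so $h(r)\to\infty$ as $r\to 0$ and $h$ is only harmonic on $\mathbb{M}_w^n\setminus\{o_w\}$ (it is, up to a constant, the Green function with pole at $o_w$). The conclusion survives with the standard repair: either observe that $v=\min\{h,h(r_0)\}$ is a bounded, positive, non-constant superharmonic function on all of $\mathbb{M}_w^n$ (it equals the constant $h(r_0)$ in a neighborhood of $o_w$), which already contradicts parabolicity; or simply note that your own capacity formula gives $\mathrm{cap}(\overline{B_{r_0}})=\omega_{n-1}\bigl(\int_{r_0}^{\infty}w^{1-n}\bigr)^{-1}>0$ when the integral converges, so both directions follow from the single capacity computation. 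With that adjustment your argument is complete; the closing remark identifying $s(r)=\int^r w^{1-n}$ as the scale function of the radial diffusion is a correct alternative route.
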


\begin{theorem} \label{grig2} \cite{GriExp} Let $\mathbb{M}_w^n$ be a model manifold with $\Lambda=\infty$. Then $\mathbb{M}_w^n$ is stochastically complete if and only if
\eq \label{stoc}
\int^{\infty} \frac{\int_0^tw^{n-1}(s)ds}{w^{n-1}(t)} dt=\infty
\eeq
\end{theorem}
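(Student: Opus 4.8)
The plan is to transfer the problem, via the analytic characterization of stochastic completeness, to the analysis of a single ordinary differential equation on the radial factor. The relevant input (a Khas'minskii-type criterion) is that a geodesically complete manifold fails to be stochastically complete precisely when the equation $\Delta v = v$ admits a bounded, nontrivial, nonnegative solution; equivalently, $\mathbb{M}_w^n$ is stochastically complete if and only if the only bounded solution of $\Delta v = v$ is $v\equiv 0$. First I would invoke this equivalence so that the whole question reduces to deciding whether $\Delta v = v$ possesses a bounded solution on $\mathbb{M}_w^n$.

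Next I would exploit the rotational symmetry of the model. Any bounded solution of $\Delta v = v$ on a complete manifold is smooth by elliptic regularity, and averaging over the isotropy group $O(n)$ of the pole $o_w$ (together with the maximum principle) shows that a bounded solution exists if and only if the radial solution that is regular at $o_w$ is bounded. Thus it suffices to study the radial equation. In the geodesic polar coordinates of the model the Laplacian of a radial function is $\Delta v = w^{-(n-1)}\left(w^{n-1}v'\right)'$, so $\Delta v = v$ becomes $\left(w^{n-1}v'\right)' = w^{n-1}v$. Imposing regularity at the center, $v'(0)=0$, and normalizing $v(0)=1$, one integration yields
\[
w^{n-1}(r)\,v'(r)=\int_0^r w^{n-1}(s)\,v(s)\,ds,
\]
from which $v'>0$, so the regular solution $v$ is positive and increasing and is therefore either bounded or tends to $\infty$.

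The heart of the argument is then a two-sided comparison against the test integral. For the first direction, using $v(s)\geq v(0)=1$ in the integral representation gives the lower bound $v'(r)\geq \left(\int_0^r w^{n-1}(s)\,ds\right)/w^{n-1}(r)$, hence
\[
v(r)-1\geq \int_0^r \frac{\int_0^\tau w^{n-1}(s)\,ds}{w^{n-1}(\tau)}\,d\tau;
\]
if the test integral diverges, then $v\to\infty$, no bounded nontrivial solution exists, and $\mathbb{M}_w^n$ is stochastically complete. Conversely, using the monotonicity $v(s)\leq v(r)$ for $s\leq r$ gives $v'(r)/v(r)\leq \left(\int_0^r w^{n-1}\right)/w^{n-1}(r)$, and integrating (the integrand behaves like $\tau/n$ near $0$ because $w(s)\sim s$, hence is integrable there) yields $\log v(r)\leq \int_0^r \big(\int_0^\tau w^{n-1}\big)/w^{n-1}(\tau)\,d\tau$; convergence of the test integral then bounds $v$, producing a bounded nontrivial solution and so stochastic incompleteness. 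Combining the two directions yields the stated equivalence, and one observes that $\int_0^t w^{n-1}(s)\,ds$ and $w^{n-1}(t)$ are, up to the constant $\mathrm{vol}(\mathbb{S}^{n-1})$, the volume of the geodesic ball and the area of the geodesic sphere of radius $t$, so the criterion is exactly the volume ratio test.

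The main obstacle is not the elementary ODE estimates, which are routine once the reduction is in place, but the justification of the first two steps: the Khas'minskii-type equivalence between stochastic completeness and the absence of bounded solutions of $\Delta v = v$, and the reduction of this existence question to the radial solution regular at the pole. Both rest on the minimal heat-semigroup and parabolic maximum-principle machinery rather than on the comparison computation, and this is where I would expect to lean most heavily on the cited results of \cite{GriExp}.
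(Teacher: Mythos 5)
This theorem is quoted by the paper from \cite{GriExp} and is not proved there, so there is no internal argument to compare against; your proposal is a correct reconstruction of the standard proof that underlies the cited result. The chain Khas'minskii criterion $\Rightarrow$ $O(n)$-averaging to reduce to the regular radial solution of $\left(w^{n-1}v'\right)'=w^{n-1}v$ $\Rightarrow$ two-sided comparison using $1\le v(s)\le v(r)$ in the integrated identity $w^{n-1}(r)v'(r)=\int_0^r w^{n-1}(s)v(s)\,ds$ is exactly the argument in Grigor'yan's survey, and both directions as you state them are sound (including the integrability of the test integrand near $r=0$).
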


Actually (see \cite{Gri2014} proof of Theorem 1.5)  if  $\int^{\infty} \frac{\int_0^tw^{n-1}(s)ds}{w^{n-1}(t)} dt=\infty$ then we can construct  a $1$-superharmonic and radial function $v$ in $M\setminus B_\rho^n(o_w)$ (that is $-\triangle u+u\geq 0$) such that $v(\rho)=1$, $v'(0)=0$, and $v(x)\to\infty$ as $x\to\infty$.

There are sufficient conditions to attain parabolicity for rotationally symmetric model manifolds as well

\begin{theorem}[see \cite{GriExp}]\label{suficient} Let $\mathbb{M}^n_w$ be a rotationally symmetric model manifold. Suppose that
$$
\int^\infty\frac{tdt}{\int_0^tw(s)ds}=\infty,
$$
then $\mathbb{M}^n_w$ is recurrent.
\end{theorem}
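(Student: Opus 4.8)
The plan is to deduce the statement from the exact recurrence criterion of Theorem \ref{grig1}, which for a model manifold says that $\mathbb{M}^n_w$ is recurrent precisely when $\int^\infty dt/w^{n-1}(t)=\infty$. Writing $V(t):=\int_0^t w^{n-1}(s)\,ds$ for (a constant multiple of) the volume of the geodesic ball of radius $t$, so that $V'=w^{n-1}$ and, in the surface case $n=2$ relevant to this paper, $V(t)=\int_0^t w(s)\,ds$ is exactly the denominator appearing in the hypothesis, it suffices to prove the contrapositive implication $\int^\infty dt/w^{n-1}(t)<\infty \Rightarrow \int^\infty t\,dt/V(t)<\infty$. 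Thus the whole theorem reduces to a purely one-dimensional analytic inequality relating $\int 1/V'$ and $\int t/V$.

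The key step is a windowed Cauchy--Schwarz estimate. On each interval $[t/2,t]$ I would write $t/2=\int_{t/2}^t ds$ and split the integrand as $\sqrt{V'(s)}\cdot 1/\sqrt{V'(s)}$, so that Cauchy--Schwarz gives $t^2/4\le \big(\int_{t/2}^t V'(s)\,ds\big)\big(\int_{t/2}^t ds/V'(s)\big)\le V(t)\int_{t/2}^t ds/V'(s)$, using that $V$ is increasing so $\int_{t/2}^t V'=V(t)-V(t/2)\le V(t)$. Rearranged, this is the pointwise bound $\dfrac{t}{V(t)}\le \dfrac{4}{t}\int_{t/2}^t\dfrac{ds}{V'(s)}$, which transfers the slow-growth information about $V$ into the (assumed convergent) reciprocal integral of $V'=w^{n-1}$.

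It then remains to integrate this bound in $t$ and exchange the order of integration. Since the integrand is nonnegative, Tonelli applies, and the double integral over $\{t\ge 2,\ t/2\le s\le t\}$ becomes, after swapping, $\int_1^\infty \frac{1}{V'(s)}\big(\int_{\max(2,s)}^{2s}\frac{dt}{t}\big)ds\le \log 2\int_1^\infty ds/V'(s)$. Hence $\int^\infty t\,dt/V(t)\le 4\log 2\int^\infty ds/w^{n-1}(s)<\infty$, which proves the contrapositive and therefore, via Theorem \ref{grig1}, the recurrence asserted in the theorem.

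I anticipate the main obstacle to be bookkeeping rather than conceptual: choosing the dyadic window $[t/2,t]$ (any fixed ratio $[\lambda t,t]$ works) so that the swapped inner integral $\int_s^{2s}dt/t$ collapses to the constant $\log 2$, and checking that the contributions near the lower endpoints stay finite. One should also record explicitly that the theorem is applied here with $n=2$, where $w^{n-1}=w$ makes the hypothesis $\int^\infty t\,dt/\int_0^t w(s)\,ds=\infty$ coincide with the volume-growth condition $\int^\infty t\,dt/V(t)=\infty$ used above.
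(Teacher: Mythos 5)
The paper does not actually prove this theorem: it is stated with the attribution ``see \cite{GriExp}'' and used as a black box, so there is no in-paper argument to compare against. Your proposal supplies a genuine, self-contained derivation from the exact recurrence criterion of Theorem \ref{grig1}, and it is correct. The windowed Cauchy--Schwarz step
$\frac{t^2}{4}=\bigl(\int_{t/2}^t ds\bigr)^2\le\bigl(\int_{t/2}^tV'\bigr)\bigl(\int_{t/2}^t\frac{ds}{V'}\bigr)\le V(t)\int_{t/2}^t\frac{ds}{V'(s)}$
is valid since $V$ is increasing and positive, and the Tonelli swap over $\{t\ge 2,\ t/2\le s\le t\}$ does yield $\int_2^\infty t\,dt/V(t)\le 4\log 2\int_1^\infty ds/V'(s)$, which proves the contrapositive. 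This is essentially the standard volume-test argument specialized to models, and it has the virtue of making Theorem \ref{suficient} a corollary of Theorem \ref{grig1} rather than an independent citation.

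One point you raise in passing deserves emphasis: the theorem as printed, with $\int_0^t w(s)\,ds$ rather than $\int_0^t w^{n-1}(s)\,ds$ in the denominator, is actually false for $n\ge 3$ (take $w(s)=s$, so $\mathbb{M}^n_w=\mathbb{R}^n$ is transient, yet $\int^\infty t\,dt/\int_0^t s\,ds=\int^\infty 2\,dt/t=\infty$). Your proof establishes the correct general statement with $V(t)=\int_0^t w^{n-1}(s)\,ds$, which coincides with the printed hypothesis exactly when $n=2$ --- the only case the paper ever uses (Corollary \ref{suf2}). So your reading is the right one, and your argument covers everything the paper needs.
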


\noindent In view of Corollary \ref{cor2.11} we can state

\begin{corollary}\label{cor2.14}Let $E$ be an end of revolution in $\MT$ isometric to the rotationally symmetric model manifold $\mathbb{M}_w^2\setminus B_\rho(o_w)$ for some radius $\rho>0$, then $E$ is parabolic if and only if
\eq \label{parabend}
\int^{\infty}_\rho \frac{dt}{w(t)}=\infty
\eeq
Moreover, if
\eq \label{stocend}
\int^{\infty}_\rho \frac{\int_0^tw(s)ds}{w(t)} dt=\infty
\eeq
then, there exist a compact $K\subset E$ and $1$-superharmonic function such that $v(x)\to\infty$ when $x\to\infty$. 
\end{corollary}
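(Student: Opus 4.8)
The plan is to transfer the intrinsic criteria of Theorems \ref{grig1} and \ref{grig2} from the whole model manifold to the end $E$ by means of the recurrence--parabolicity dictionary recalled in the introduction. By hypothesis $E$ is isometric to $\mathbb{M}_w^2\setminus B_\rho(o_w)$, and since $w$ is a warping function of radius $\Lambda=\infty$, the model space $\mathbb{M}_w^2$ is a complete, non-compact, rotationally symmetric surface. Taking $D:=B_\rho(o_w)$ as the precompact set, the complement $\mathbb{M}_w^2\setminus D=\{r>\rho\}$ is diffeomorphic to the cylinder $(\rho,\infty)\times\mathbb{S}^1$ and hence is the single connected unbounded component of $\mathbb{M}_w^2\setminus D$; thus $\mathbb{M}_w^2$ has exactly one end with respect to $D$, namely $E$. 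By the equivalence between recurrence of the Brownian motion on $\mathbb{M}_w^2$ and parabolicity of every end of $\mathbb{M}_w^2$ stated in the introduction, $E$ is parabolic if and only if $\mathbb{M}_w^2$ is recurrent.

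Next I would apply Theorem \ref{grig1} with $n=2$: the model manifold $\mathbb{M}_w^2$ is recurrent if and only if $\int^{\infty}dt/w(t)=\infty$. Because $w$ is continuous and strictly positive on every compact subinterval of $(0,\infty)$, the convergence of this improper integral is governed solely by the behaviour of $1/w$ as $t\to\infty$; in particular $\int^{\infty}dt/w(t)=\infty$ if and only if $\int_{\rho}^{\infty}dt/w(t)=\infty$. Combining this tail equivalence with the equivalence of the previous paragraph yields the first assertion \eqref{parabend}.

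For the moreover part I would invoke the refinement of Theorem \ref{grig2} recorded in the remark following it. Assuming $\int_{\rho}^{\infty}\big(\int_0^t w(s)\,ds\big)/w(t)\,dt=\infty$, the same tail argument shows that the full integral $\int^{\infty}\big(\int_0^t w(s)\,ds\big)/w(t)\,dt$ diverges, which is precisely the hypothesis under which (for $n=2$) one constructs a radial $1$-superharmonic function $v$ on $\mathbb{M}_w^2\setminus B_\rho(o_w)=E$ satisfying $v(\rho)=1$, $v'(0)=0$ and $v(x)\to\infty$ as $x\to\infty$. Taking $K:=\partial B_\rho(o_w)\subset E$, the compact geodesic circle bounding $E$, the function furnished by the remark is defined on all of $E$ with $v|_K=1$ and provides the desired $1$-superharmonic function, proving the second claim.

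I would expect the only genuine point requiring care to be the identification of $E$ with the unique end of $\mathbb{M}_w^2$ relative to $B_\rho(o_w)$, so that the \emph{global} recurrence criterion of Theorem \ref{grig1} translates into a statement about $E$ alone; the passage from $\int^{\infty}$ to $\int_\rho^{\infty}$, which hinges only on the positivity and continuity of $w$ away from the center point, and the direct quotation of the superharmonic-function construction are then routine.
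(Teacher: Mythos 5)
Your proposal is correct and follows essentially the same route the paper intends: the corollary is presented there as an immediate consequence of Corollary \ref{cor2.11} together with Theorem \ref{grig1} (with $n=2$) and the remark following Theorem \ref{grig2}, using exactly the identification of $E$ with the unique end of $\mathbb{M}_w^2$ relative to $B_\rho(o_w)$ and the tail-equivalence of the integrals that you spell out.
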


\noindent We will need  moreover the following proposition
\begin{proposition}[see Theorem 1.3 of \cite{Gri2014}]\label{prop2.15}Let $M$ be a connected manifold and $K\subset M$ be a compact set. Assume that there exist a $1$-superharmonic function in $m\setminus K$ such that $v(x)\to\infty$ as $x\to\infty$. Then $M$ is stochastically complete.
\end{proposition}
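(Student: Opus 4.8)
The plan is to recognize the statement as the Khas'minskii non-explosion test and to prove it by reducing stochastic completeness to a maximum principle at infinity. First I would unwind the hypothesis: a $1$-superharmonic function is one with $-\triangle v+v\ge 0$, i.e. $\triangle v\le v$, and by assumption $v$ is proper on $M\setminus K$ in the sense that $v(x)\to\infty$ as $x\to\infty$. Thus $v$ is a proper supersolution of $\triangle-1$ outside the compact set $K$, which is exactly the datum appearing in Khas'minskii's criterion. Since the statement is quoted as Theorem 1.3 of \cite{Gri2014}, one option is simply to invoke it; but I would rather reprove it, taking as the single external input the characterization (due to Grigor'yan, and in the form of Pigola--Rigoli--Setti) that $M$ is stochastically complete \emph{if and only if} the weak maximum principle at infinity holds: for every $u\in C^2(M)$ with $u^\ast:=\sup_M u<\infty$ and every $\gamma_0<u^\ast$ one has $\inf_{\{u>\gamma_0\}}\triangle u\le 0$.

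Assuming $M$ were not stochastically complete, the negation of this principle provides a $u\in C^2(M)$ with $u^\ast<\infty$, together with constants $\eta>0$ and $\gamma_0<u^\ast$ such that $\triangle u\ge\eta$ on $\Omega:=\{u>\gamma_0\}$. The first key move is to arrange $\Omega\subset M\setminus K$: since $\triangle u\ge\eta>0$ on $\Omega$ forbids an interior maximum, $u^\ast$ is only approached at infinity, so $\sup_K u<u^\ast$, and enlarging $\gamma_0$ above $\sup_K u$ pushes $\Omega$ entirely into $M\setminus K$, where both $\triangle u\ge\eta$ and $\triangle v\le v$ are available simultaneously. For each small $\epsilon>0$ the function $u-\epsilon v$ tends to $-\infty$ at infinity yet exceeds $\gamma_0$ at some point of $\Omega$, hence attains an interior maximum at a point $\bar x_\epsilon\in\Omega$. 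Evaluating $\triangle(u-\epsilon v)\le 0$ at $\bar x_\epsilon$ and inserting $\triangle u\ge\eta$ together with $\triangle v\le v$ gives $\eta\le\epsilon\,v(\bar x_\epsilon)$, so the maximum value satisfies $u(\bar x_\epsilon)-\epsilon v(\bar x_\epsilon)\le u^\ast-\eta$. Comparing this with $u(p)-\epsilon v(p)$ for a fixed $p\in\Omega$ with $u(p)$ within $\tfrac{\eta}{2}$ of $u^\ast$, and letting $\epsilon\to 0$, yields $u^\ast-\tfrac{\eta}{2}\le u^\ast-\eta$, a contradiction. Hence $M$ is stochastically complete.

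An alternative, purely probabilistic route would be to run the original Khas'minskii argument directly on the Brownian motion $X_t$ associated with the heat kernel: one shows that $e^{-t}v(X_t)$ is a supermartingale while $X_t$ stays in $M\setminus K$ (its It\^o drift being $e^{-t}(\triangle v-v)(X_t)\le 0$ there), absorbs the bounded contribution of the compact $K$ into a finite constant, and observes that on an explosion event $X_t$ must escape every compact set in finite time, forcing $e^{-t}v(X_t)\to\infty$---impossible for a nonnegative supermartingale with uniformly bounded expectation. I expect the main obstacle to be not the comparison computation, which is short, but securing the right equivalence together with its hypotheses: one must justify that stochastic completeness is detected by the weak maximum principle at infinity, and one must handle the inner boundary by ensuring that the competition between $u$ and $\epsilon v$ takes place outside $K$---which is precisely what the choice $\gamma_0>\sup_K u$ and the properness of $v$ accomplish.
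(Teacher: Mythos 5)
You should first be aware that the paper contains no proof of Proposition \ref{prop2.15} at all: it is imported as Theorem 1.3 of \cite{Gri2014}, so there is nothing internal to compare against, and your attempt has to be judged on its own. On those terms it is essentially correct and is the standard Pigola--Rigoli--Setti derivation of the Khas'minskii test from the weak maximum principle at infinity: the localization step (if $\sup_K u=u^*$ a global interior maximum would lie in $\Omega$ and force $\triangle u\le 0$ there, so $\sup_K u<u^*$ and one may raise $\gamma_0$), the comparison $\eta\le\triangle u(\bar x_\epsilon)\le\epsilon\,\triangle v(\bar x_\epsilon)\le\epsilon\,v(\bar x_\epsilon)$ at the interior maximum, and the limit $\epsilon\to 0$ against a point $p$ with $u(p)>u^*-\eta/2$ are all sound. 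Two comments. First, the single external input you take --- the full equivalence of stochastic completeness with the weak maximum principle at infinity --- is a heavier theorem than the statement being proved; you only use the direction ``not stochastically complete $\Rightarrow$ WMP fails'', which reduces to the classical characterization of stochastic completeness by the nonexistence of nonzero bounded nonnegative solutions of $\triangle u\ge\lambda u$ (this is in \cite{GriExp}, which the paper already cites). Starting from such a $u$ directly, so that $\triangle u=u>\gamma_0>0$ on $\{u>\gamma_0\}$, gives the same contradiction with a lighter and more self-contained hypothesis. Second, a small genuine gap: to conclude that $u-\epsilon v$ attains its supremum at an \emph{interior} point of $\Omega$ you must rule out the boundary $\{u=\gamma_0\}$, where $u-\epsilon v=\gamma_0-\epsilon v$; this requires $v$ to be bounded below on $\overline{\Omega}$, which the proposition does not assume (it does not posit $v\ge 0$). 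The fix is one line --- $\{u\ge\gamma_0\}$ is a closed subset of $M\setminus K$ on which the continuous function $v$ tends to $\infty$ at infinity, hence is bounded below there (and in the paper's application $v$ is in fact $\ge 1$) --- but it should be said. Your probabilistic alternative is the original Khas'minskii supermartingale argument and would also work, though as written it is only a sketch.
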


\subsection{Conformal models of $\MT$ and ends of revolution}\label{confmodel}
The real space forms $\RT$, $\ST$ and $\HT$ can be defined as the 3-dimensional real space forms $\MT$ of constant sectional curvature $\kappa=0,1$ and $-1$ respectively. By using Corollary \ref{cor2.11} each end of revolution is isometric to a rotationally symmetric model manifold $\mathbb{M}_w^2\setminus B_\rho(o_w)$ where we have subtracted some ball $B_\rho(o_w)$. The proof of the Theorems \ref{teounua},\ref{teoendh3},\ref{teoE} makes use of Corollary \ref{cor2.14} were is related the conformal type and the stochastic completeness with the properties of the warping function $w$. Hence, in order to apply Corollary \ref{cor2.14} we need to know the warping function $w$ of such rotationally symmetric models $\mathbb{M}_w^2$. 

From now on we are going to work with conformal models of $\MT$ (see \cite{John}), namely

\eq \label{metricas}
\al
\RT&:= \left\{ (x_1,x_2,x_3) \in \RT\, :\,  g = dx_1^2 +  dx_2^2+ dx_3^2 \right\} \\
\HT& : = \left\{ (x_1,x_2,x_3) \in \RT,\, x_3>0\, :\,  g_{-1} = \frac{1}{x_3^2} \left(dx_1^2 +  dx_2^2+ dx_3^2  \right) \right\}\\
\ST-\{N\}&:= \left\{ (x_1,x_2,x_3) \in \RT\, :\, g_{1} = \frac{4\left(dx_1^2 +  dx_2^2+ dx_3^2  \right)}{\left( 1+x_1^2 + x_2^2 + x_3^2 \right)^2}  \right\}
\aal
\eeq

\vspace{0.5cm}
Indeed, $\MT$  can be seen as $\RT$ endowed with a conformal metric:
\eq \label{conf}
\al
&\MT: = \left\{ (x_1,x_2,x_3) \in \RT :  g_{\kappa} = \eta_{\kappa}(x) \cdot g \right\}
\aal
\eeq
with
\begin{equation}\label{eq3.3}
\eta_{\kappa}(x):=\left\{
\begin{array}{lcl}
\frac{1}{x_3^2}&\text{ if }&\kappa=-1\\
1&\text{ if }&\kappa=0\\
\frac{4}{\left( 1+x_1^2 + x_2^2 + x_3^2 \right)^2}& \text{ if }&\kappa=1
\end{array}
\right.
\end{equation}
Observe that $x_3$-axis in such  models is a geodesic curve. And for any point $x\in\MT$, with  $x=(0,0,x_3)$, 
$$
R_{\theta}: = \left( \begin{array}{ccc}
\cos \theta & -\sin \theta & 0 \\
\sin \theta & \cos \theta & 0 \\
0 & 0 & 1
\end{array} \right)
$$ 
is the subgroup of the isotropy group of $x$ such that the geodesic curve $\gamma(t)=(0,0,x_3+t)$ remains fixed under the action of $R_\theta$. Taking into account moreover that the $(x_1,x_3)$-plane is a totally geodesic submanifold we can construct ends of revolution in $\MT$  in the following way:

\begin{definition} \label{rev} 
 Let $\Gamma$ be a regular curve in the $(x_1,x_3)$-plane. We define an end of revolution $E:=R_{\theta}\Gamma $ as the action of $R_{\theta}$ to curve $\Gamma$. Here we parameterize the curve $\Gamma$ as
$$
\Gamma:= \gamma(s) = (\gamma_1(s), 0, \gamma_2(s))
$$
The end of revolution can be parametrized therefore as
\eq  \label{immersion}
E:= f(s, \theta) = \left( \gamma_1(s) \cos \theta, \gamma_1(s) \sin \theta, \gamma_2(s) \right)
\eeq
\end{definition}

\begin{remark} Note that by using Theorem \ref{theoprelim} if we impose $\gamma_1(s)>0$, the map given in formula (\ref{immersion}) can be considered as an immersion from $[0,\infty)\times\mathbb{S}_1$ to $\MT$. \end{remark}

Lets recall that every regular curve admits a reparametrization by arc length. Then, from the metrics defined in (\ref{metricas}) and considering that the parametrization of the immersion given in (\ref{immersion}) satisfies $\| f_s(s, \theta) \|^2 = 1$, it is easy to see that the metric inherited by the end from each ambient space $\MT$ can be written as 
\eq \label{metric}
g_E = ds^2 + w_\kappa^2(s) d\theta^2
\eeq
where
\begin{equation}\label{eq2.58}
w_{\kappa}(s):=\left\{
\begin{array}{lcl}
\frac{\gamma_1(s)}{\gamma_2(s)}&\text{ if }&\kappa=-1\\
\gamma_1(s)&\text{ if }&\kappa=0\\
\frac{2 \gamma_1(s)}{ 1+ \gamma_1^2(s) + \gamma_2^2(s)}& \text{ if }&\kappa=1
\end{array}
\right.
\end{equation}
Indeed, taking $\eta_{\kappa}(s)$ from definition (\ref{eq3.3}) we can rewrite the inherited metric as
\eq \label{metric2}
g_E= ds^2 + \eta_{\kappa}^2 (s) w_0^2(s) d\theta^2
\eeq
Hence we can summarize with the following proposition
\begin{proposition}\label{prop2.16} Let $E$ be an end of revolution of $\MT$ parametrizated by
$$f(s, \theta) = \left( \gamma_1(s) \cos \theta, \gamma_1(s) \sin \theta, \gamma_2(s) \right)$$ with $s\in [0,\infty)$, $\theta\in [0,2\pi]$. Suppose $\gamma(s)=\left( \gamma_1(s), 0, \gamma_2(s) \right)$ is a regular curve parametrized by arc length and suppose moreover that $\gamma_1>0$. Then, for  $\rho>0$, $E$ is isometric to $\mathbb{M}_{w}^2\setminus B_\rho(o_w)$, where  $\mathbb{M}_{w}^2$ is the rotationally symmetric model space given by  warping function $w$ satisfying
\begin{equation}
\left\{
\begin{array}{lcl}
w(t+\rho)=w_\kappa(t)&\text{ for }&t\geq 0\\
w(0)=0& &\\
w'(0)=0& &\\
w^{(2k)}(0)=0& &\\
\end{array}
\right.
\end{equation}
with $w_\kappa$ given in definition (\ref{eq2.58}). Hence by applying corollary \ref{cor2.14} $E$ is parabolic if and only if
\eq 
\int^{\infty}_\rho \frac{dt}{w(t)}=\int^{\infty}_0 \frac{ds}{w_\kappa(s)}=\infty
\eeq
and if
\eq 
\int^{\infty}_\rho \frac{\int_0^tw(s)ds}{w(t)} dt=\int_0^\infty \frac{\int_0^\rho w(s)ds+\int_0^tw_\kappa(s)ds}{w_{\kappa}(t)}dt=\infty
\eeq 
then, there exist a compact set $K\subset E$ and $1$-superharmonic function such that $v(x)\to\infty$ when $x\to\infty$.
\end{proposition}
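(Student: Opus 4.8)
The plan is to obtain the statement by chaining together three facts already in hand: the explicit induced metric, the model-space extension of Corollary \ref{cor2.11}, and the conformal-type criteria of Corollary \ref{cor2.14}. The only genuine computation will be a change of variable $t=s+\rho$ inside two integrals.

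First I would record the induced metric. Differentiating $f(s,\theta)=(\gamma_1\cos\theta,\gamma_1\sin\theta,\gamma_2)$ and pairing the coordinate fields with the conformal ambient metric $g_\kappa=\eta_\kappa\, g$ of (\ref{conf})--(\ref{eq3.3}) gives $g_\kappa(f_s,f_\theta)=0$, $g_\kappa(f_s,f_s)=\eta_\kappa(\dot\gamma_1^2+\dot\gamma_2^2)$ and $g_\kappa(f_\theta,f_\theta)=\eta_\kappa\,\gamma_1^2$. The arc-length hypothesis makes the first coefficient equal to $1$, so $g_E=ds^2+\eta_\kappa\,\gamma_1^2\,d\theta^2$; evaluating $\eta_\kappa$ along $\gamma$ (where $x_1^2+x_2^2+x_3^2=\gamma_1^2+\gamma_2^2$ and $x_3=\gamma_2$) for $\kappa=-1,0,1$ reproduces the three warping functions $w_\kappa=\sqrt{\eta_\kappa}\,\gamma_1$ of (\ref{eq2.58}), which is exactly (\ref{metric}). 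Since $\gamma_1>0$ and $\eta_\kappa>0$, the function $w_\kappa$ is positive.

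Next I would feed $w_\kappa$ into Corollary \ref{cor2.11}: for the fixed $\rho>0$, Lemma \ref{lemExt} extends it to a positive $w\in C^\infty[0,\infty)$ satisfying $w(t+\rho)=w_\kappa(t)$ together with the pole conditions making the model smooth at $o_w$, and produces an isometry $E\cong\mathbb{M}_w^2\setminus B_\rho(o_w)$. Applying the parabolicity half of Corollary \ref{cor2.14} to this $w$ gives parabolicity precisely when $\int_\rho^\infty dt/w(t)=\infty$; substituting $t=s+\rho$ and using $w(s+\rho)=w_\kappa(s)$ converts this into $\int_0^\infty ds/w_\kappa(s)=\infty$, the first claimed identity.

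Finally, for the stochastic statement I would apply the second half of Corollary \ref{cor2.14}: if $\int_\rho^\infty\frac{\int_0^t w(s)\,ds}{w(t)}\,dt=\infty$ then there is a compact $K\subset E$ and a radial $1$-superharmonic $v$ with $v\to\infty$. The substitution $t=\tau+\rho$ in the outer integral, together with the splitting $\int_0^{\tau+\rho}w(s)\,ds=\int_0^\rho w(s)\,ds+\int_\rho^{\tau+\rho}w(s)\,ds$ and the change of variable $\int_\rho^{\tau+\rho}w(s)\,ds=\int_0^\tau w_\kappa(u)\,du$, rewrites the hypothesis as the stated $\int_0^\infty\frac{\int_0^\rho w(s)\,ds+\int_0^t w_\kappa(s)\,ds}{w_\kappa(t)}\,dt=\infty$. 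The hard part is nothing conceptual but only the bookkeeping of this last double-integral substitution, plus the observation that the boundary term $\int_0^\rho w(s)\,ds$ is a finite constant (as $w$ is continuous on the compact $[0,\rho]$); beyond that the proposition is just a packaging of Corollaries \ref{cor2.11} and \ref{cor2.14}.
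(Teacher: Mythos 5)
Your proposal is correct and follows essentially the same route as the paper: the paper obtains $g_E=ds^2+w_\kappa^2(s)\,d\theta^2$ by the same direct computation with the conformal metrics (\ref{metricas}) and the arc-length normalization $\|f_s\|^2=1$, and then presents the proposition as an immediate consequence of Corollaries \ref{cor2.11} and \ref{cor2.14}, with the change of variable $t=s+\rho$ left implicit. Your bookkeeping of the substitution in the two integrals is accurate, so nothing is missing.
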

From Theorem \ref{suficient} we can state
\begin{corollary}\label{suf2}Let $E$ be an end of revolution in $\MT$  if
$$
\int^\infty\frac{tdt}{\int_0^tw_\kappa(s)ds}=\infty,
$$
then $E$ is parabolic.
\end{corollary}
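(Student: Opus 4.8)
The plan is to reduce the statement to Theorem \ref{suficient} applied to the rotationally symmetric model manifold attached to $E$ by Proposition \ref{prop2.16}. By that proposition, $E$ is isometric to $\mathbb{M}_w^2\setminus B_\rho(o_w)$ for a positive warping function $w\in C^\infty[0,\infty)$ obeying $w(t+\rho)=w_\kappa(t)$ for all $t\geq 0$, together with the smoothness conditions at the center point. Since $\mathbb{M}_w^2\setminus B_\rho(o_w)$ is precisely the (unique) end of the single-ended model manifold $\mathbb{M}_w^2$ taken with respect to the geodesic ball $B_\rho(o_w)$, the end $E$ is parabolic exactly when $\mathbb{M}_w^2$ is recurrent, by the equivalence between recurrence and parabolicity of every end recalled in the introduction. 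Theorem \ref{suficient} guarantees recurrence of $\mathbb{M}_w^2$ as soon as $\int^\infty t\,dt/\int_0^t w(s)\,ds=\infty$, so it suffices to show that the hypothesis, phrased in terms of $w_\kappa$, forces this condition on $w$.

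First I would record the relation between the two cumulative integrals. Writing $A(t):=\int_0^t w_\kappa(s)\,ds$ and $B(t):=\int_0^t w(s)\,ds$, the substitution $u=s-\rho$ in $\int_\rho^t w(s)\,ds$ together with $w(s)=w_\kappa(s-\rho)$ for $s\geq\rho$ gives
\eq
B(t)=\int_0^\rho w(s)\,ds+A(t-\rho)=:C+A(t-\rho),\qquad t\geq\rho,
\eeq
with $C=\int_0^\rho w(s)\,ds$ a fixed finite nonnegative constant. Performing the change of variable $\tau=t-\rho$ in the target integral then yields
\eq
\int^\infty \frac{t\,dt}{B(t)}=\int^\infty \frac{(\tau+\rho)\,d\tau}{C+A(\tau)}.
\eeq

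The remaining work is the elementary asymptotic comparison of this last integral with the hypothesis $\int^\infty \tau\,d\tau/A(\tau)=\infty$, which I would split into two cases according to the behaviour of $A$ at infinity. If $A(\tau)\to\infty$, then $C+A(\tau)\leq 2A(\tau)$ once $\tau$ is large, so the integrand is bounded below by $\tau/(2A(\tau))$ and divergence is inherited directly from the hypothesis. If instead $A(\tau)$ remains bounded, then $C+A(\tau)$ is bounded above by a constant and the integrand grows at least linearly in $\tau$, so the integral diverges unconditionally (and the hypothesis holds automatically in this regime). In either case $\int^\infty t\,dt/B(t)=\infty$, and Theorem \ref{suficient} applies to give recurrence of $\mathbb{M}_w^2$, hence parabolicity of $E$.

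I expect the only genuine obstacle to be bookkeeping rather than substance: one must check that the shift by $\rho$ in both the numerator and the denominator is harmless, which is exactly what the two-case comparison settles. No new geometric input is required, since Proposition \ref{prop2.16} supplies both the model and the identity $w(t+\rho)=w_\kappa(t)$, while Theorem \ref{suficient} supplies the sufficient parabolicity criterion; alternatively one could route the conclusion through Corollary \ref{cor2.14}, but the direct appeal to recurrence of $\mathbb{M}_w^2$ is cleaner.
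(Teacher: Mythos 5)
Your argument is correct and follows exactly the route the paper intends: the paper states Corollary \ref{suf2} with no written proof beyond ``From Theorem \ref{suficient} we can state,'' and your reduction via Proposition \ref{prop2.16}, the identity $w(t+\rho)=w_\kappa(t)$, and the two-case comparison of $\int^\infty t\,dt/\int_0^t w$ with $\int^\infty t\,dt/\int_0^t w_\kappa$ simply makes that implicit deduction explicit. Nothing is missing.
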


\section{Proof of Theorem \ref{teounua}}\label{proofA}
Theorem \ref{teounua} states that any end of revolution in $\RT$ or in $\ST$ is a parabolic end. Here we split the prove in these two settings 
\subsection{End immersed in $\RT$}
\begin{proof} 
Lets recall that the  generating curve $\gamma(s)$ was parameterized by its arc length. This implies that $(\dot\gamma_1)^2(s) \le 1$. Using definition (\ref{eq2.58}) we find this equivalent to
$$
-1 \le \dot w_0(t) \le 1
$$
Integrating the latter we obtain
$$
-t  \le w_0(t)-w_0(0) \le t
$$
and thus $w_0(t) \le t + w_0(0)$. By using the criterion for parabolicity given by proposition \ref{prop2.16}
\begin{equation}
\int^{\infty}_0 \frac{1}{w_0(s)}ds = \lim_{R \to \infty} \int_{0}^R \frac{1}{w_0(s)}ds \ge \lim_{R \to \infty} \int_{0}^{R} \frac{1}{t+w_0(0)} dt = \infty.
\end{equation}
Which means that each complete end of revolution $E$, when immersed in $\RT$, is of parabolic conformal type independently of the curve $\gamma(s)$. 
\end{proof}
\subsection{End immersed in $\ST$}
\begin{proof} 
Applying the criterion for parabolicity at proposition \ref{prop2.16}  and the expression (\ref{eq2.58}) for the function $w_1(s)$ we have to prove that the following integral is divergent
$$
\int^{\infty}_0 \frac{1}{w_1(s)} ds  = \int^{\infty}_0 \frac{1+\gamma_1^2(s) + \gamma_2^2(s) }{2 \gamma_1(s)} ds.
$$
For any $\varepsilon>0$, we can split the interval where we integrate ($I =[0, \infty)$) in two parts: $I_+ =\{t \in I : \gamma_1(t) \ge \varepsilon \}$ and $I_-=\{ t \in I : \gamma_1(t) < \varepsilon \}$, such that $I_+ \cup I_- = I$, $I_+ \cap I_- = \emptyset$ and since $\int_I dx=\infty$, thus $\int_{I_+}dx + \int_{I_-}dx= \infty$. Then we have two cases.

\item [{\bfseries Case I:} $\int_{I_+}dx = \infty$], so as we have seen:
$$
\int_{I} \frac{1}{w_1(s)} ds \ge \int_{I_+} \frac{1+\gamma_1^2(s) + \gamma_2^2(s) }{2 \gamma_1(s)} ds \ge \frac{1}{2} \int_{I_+} \gamma_1(s) ds \ge \frac{1}{2}\varepsilon \int_{I_+} ds = \infty
$$

\item [{\bfseries Case II:} $\int_{I_+}dx <\infty$ ($\int_{I_-}dx = \infty$)], then
$$
\al
\int_{I} \frac{1}{w_1(s)} ds \geq &\int_{I_-} \frac{1+\gamma_1^2(s) + \gamma_2^2(s) }{2 \gamma_1(s)} ds \ge  \int_{I_-} \frac{1}{2 \gamma_1(s)} ds\\
 \ge &\int_{I_-} \frac{1}{2 \varepsilon} ds \ge \frac{1}{2 \varepsilon} \int_{I_-} ds = \infty
\aal
$$

Which means again that the end is parabolic independently of the curve $\gamma(s)$.\end{proof}
\section{Proof Theorem \ref{teoendh3} } \label{proofB}

Theorem \ref{teoendh3} states that every end of revolution on a $c$-cone is a parabolic end. Observe that if the end is on a $c$-cone then the generating profile curve  $\gamma(s)=(\gamma_1(s),0,\gamma_2(s))$ satisfies 
\begin{equation}
\frac{\gamma_2(s)}{\gamma_1(s)}\geq c.
\end{equation}
 Substituting the function $w_{-1}(s)$ given by (\ref{eq2.58}) in the criterion for parabolicity given in proposition (\ref{prop2.16}), we get that the end of the surface will be parabolic because
$$
\int^{\infty}_0 \frac{1}{w_{-1}(s)}ds = \int^{\infty}_0 \frac{\gamma_2(s)}{\gamma_1(s)} ds \geq\int^{\infty}_0 c\,ds= \infty.
$$
This finishes the proof of Theorem \ref{teoendh3}. However, we can state something more general. let us denote $I_+ := \{t \in I : \frac{\gamma_2(t)}{\gamma_1(t)} \ge c \}$ and $I_- := \{t \in I : \frac{\gamma_2(t)}{\gamma_1(t)} < c\}$. Then,
$$
\int_I \frac{1}{w_{-1}(s)}ds = \int_{I_+} \frac{\gamma_2(s)}{\gamma_1(s)} ds +  \int_{I_-} \frac{\gamma_2(s)}{\gamma_1(s)} ds\geq c\int_{I_+}ds +  \int_{I_-} \frac{\gamma_2(s)}{\gamma_1(s)}ds.
$$
Hence if $\int_{I_+}ds =\infty$, the end will still be parabolic. Therefore
\begin{theorem}\label{teo4.1}
Let $E$ be an end of revolution in $\HT$. Suppose that the generating curve of $E$ satisfies
$$
\int_{I_+}ds=\infty.
$$
Then, the end is parabolic.
\end{theorem}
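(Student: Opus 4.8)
The plan is to reduce the statement entirely to the parabolicity criterion furnished by Proposition \ref{prop2.16}, specialized to the hyperbolic case $\kappa=-1$. According to that criterion, the end $E$ is parabolic if and only if $\int_0^\infty \frac{ds}{w_{-1}(s)}=\infty$, and since the expression (\ref{eq2.58}) gives $w_{-1}(s)=\gamma_1(s)/\gamma_2(s)$, the whole problem collapses to showing that
\[
\int_0^\infty \frac{\gamma_2(s)}{\gamma_1(s)}\,ds=\infty .
\]
So after invoking the criterion, no further analytic machinery is needed beyond a single lower bound on this integral.

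First I would partition the domain $I=[0,\infty)$ into the complementary pieces $I_+$ and $I_-$ already introduced, on which $\gamma_2/\gamma_1\ge c$ and $\gamma_2/\gamma_1<c$ respectively. On $I_+$ the integrand is bounded below by the constant $c$, giving $\int_{I_+}\frac{\gamma_2}{\gamma_1}\,ds\ge c\int_{I_+}ds$. On $I_-$ the integrand remains strictly positive, since $\gamma_1>0$ by the standing hypothesis on the generating curve and $\gamma_2=x_3>0$ in the half-space model; hence the contribution over $I_-$ is non-negative and may simply be discarded. Adding the two pieces yields $\int_0^\infty \frac{\gamma_2}{\gamma_1}\,ds\ge c\int_{I_+}ds$.

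The conclusion then follows at once: the hypothesis $\int_{I_+}ds=\infty$ forces the right-hand side, and therefore the left-hand side, to diverge, and Proposition \ref{prop2.16} delivers parabolicity of $E$. I do not anticipate any genuine obstacle here, as the argument is essentially a one-line estimate once the integral criterion is in place. The only point deserving a word of care is that $I_+$ be a legitimate domain of integration, which is immediate from the continuity of the regular curve $\gamma$: the ratio $\gamma_2/\gamma_1$ is continuous on $[0,\infty)$, so $I_+$ is a relatively closed (hence measurable) subset, and the splitting $\int_I=\int_{I_+}+\int_{I_-}$ is valid for the non-negative integrand.
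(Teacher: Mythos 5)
Your argument is correct and is essentially the same as the paper's: both split $I=[0,\infty)$ into $I_+$ and $I_-$, bound the integrand $\gamma_2/\gamma_1 = 1/w_{-1}$ below by $c$ on $I_+$, discard the non-negative contribution over $I_-$, and conclude via the parabolicity criterion of Proposition \ref{prop2.16}. Nothing further is needed.
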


\section{Proof of Theorem \ref{slice}}\label{proofC}
The first step to prove Theorem \ref{slice} is to prove previously the following proposition

\begin{proposition}\label{limitedx}Let $E$ be a complete end of revolution immersed in $\HT$. Suppose that $E$ is a non-parabolic end, and $\gamma:[0,\infty)\to \HT$ is the profile curve of $E$ parametrized by arc length in the half space model of the hyperbolic space given by
$$
\gamma(s)=(\gamma_1(s),0,\gamma_2(s))
$$
then 
$$
\sup_{s\in [0,\infty)} \gamma_1(s)<\infty,
$$
\end{proposition}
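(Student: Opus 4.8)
The plan is to convert the non-parabolicity hypothesis, via the integral criterion of Proposition \ref{prop2.16}, into an integrability statement about $\gamma_2/\gamma_1$, and then to bound $\gamma_1$ by controlling the total variation of $\ln\gamma_1$. Since $\gamma_1>0$ throughout (this is exactly the hypothesis that makes the parametrized end an immersion, cf.\ the Remark following Definition \ref{rev}), it suffices to show that $\ln\gamma_1(s)-\ln\gamma_1(0)$ stays bounded for all $s\in[0,\infty)$.

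First I would extract two facts. On one hand, because $\gamma$ is parametrized by arc length in the half-space metric $g_{-1}=x_3^{-2}(dx_1^2+dx_2^2+dx_3^2)$ and lies in the plane $x_2=0$ with $x_3=\gamma_2$, the unit-speed condition reads $\dot\gamma_1^2+\dot\gamma_2^2=\gamma_2^2$; in particular $|\dot\gamma_1|\le\gamma_2$. On the other hand, recalling from (\ref{eq2.58}) that $w_{-1}=\gamma_1/\gamma_2$, the parabolicity criterion of Proposition \ref{prop2.16} states that $E$ is parabolic precisely when $\int_0^\infty (\gamma_2/\gamma_1)\,ds=\infty$; since $E$ is assumed non-parabolic, the integral $C:=\int_0^\infty (\gamma_2(s)/\gamma_1(s))\,ds$ is finite.

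The two facts combine immediately: dividing $|\dot\gamma_1|\le\gamma_2$ by $\gamma_1>0$ gives $|\,\tfrac{d}{ds}\ln\gamma_1(s)\,|=|\dot\gamma_1(s)|/\gamma_1(s)\le \gamma_2(s)/\gamma_1(s)$, so integrating from $0$ to $s$ yields $|\ln\gamma_1(s)-\ln\gamma_1(0)|\le\int_0^s(\gamma_2/\gamma_1)\,du\le C$ for every $s$. Exponentiating, $\gamma_1(s)\le\gamma_1(0)\,e^{C}$, and this bound is uniform in $s$, which is exactly the asserted conclusion $\sup_{s}\gamma_1(s)<\infty$.

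I do not expect any genuine obstacle here: once the arc-length identity and the non-parabolicity integral are written down, the estimate is elementary. The only points requiring care are ensuring $\gamma_1>0$ so that $\ln\gamma_1$ and the division are legitimate (supplied by the immersion hypothesis), and correctly reading off $w_{-1}$ together with the orientation of the parabolicity criterion from Proposition \ref{prop2.16}.
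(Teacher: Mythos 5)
Your proposal is correct and follows essentially the same route as the paper's own proof: bound $\bigl|\tfrac{d}{ds}\log\gamma_1\bigr|$ by $\gamma_2/\gamma_1=1/w_{-1}$ using the arc-length identity, then invoke the non-parabolicity criterion of Proposition \ref{prop2.16} to see that $\int_0^\infty(\gamma_2/\gamma_1)\,ds<\infty$ and conclude that $\log\gamma_1$ stays bounded. The only (harmless) difference is that you keep the two-sided bound with absolute values, while the paper only records the upper estimate it needs.
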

\begin{proof}
Since $\gamma_1$ is a positive and smooth function, then,
\begin{equation}\label{limited0}
\log\gamma_1(s)-\log\gamma_1(0)=\int_0^s\frac{d}{dt}\left(\log \gamma_1(t)\right)dt=\int_0^s\frac{\dot\gamma_1(t)}{\gamma_1(t)}dt.
\end{equation}
But taking into account that $\gamma$ is parametrizated by arc length, namely, $$\frac{(\dot\gamma_1(s))^2+(\dot\gamma_2(s))^2}{(\gamma_2(s))^2}=1,$$ then,
$$
\dot \gamma_1(s)\leq \vert \dot \gamma_1(s)\vert \leq \gamma_2(s)
$$
and hence, inequality (\ref{limited0}) can be rewritten as
\begin{equation}
\log\gamma_1(s)-\log\gamma_1(0)\leq\int_0^s\frac{\gamma_2(t)}{\gamma_1(t)}dt
\end{equation}
By using the function $w_{-1}(s)$ given by (\ref{eq2.58}) in the criterion for parabolicity given in proposition \ref{prop2.16}
\begin{equation}
\int^{\infty}_0 \frac{1}{w_{-1}(s)}ds = \int^{\infty}_0 \frac{\gamma_2(s)}{\gamma_1(s)} ds <\infty.
\end{equation}
And hence,
\begin{equation}
\log\gamma_1(s)-\log\gamma_1(0)<\int_0^\infty\frac{\gamma_2(t)}{\gamma_1(t)}dt<\infty. 
\end{equation}
\end{proof}
\begin{proof}[Proof of Theorem \ref{slice}]
Theorem \ref{slice} states that any complete end of revolution contained on a horosphere $\{x_3=z\}$ is a parabolic end. Since the end is on the horosphere $\{x_3=z\}$
then 
\begin{equation}\label{eqslice}
\int^{\infty}_0 \frac{1}{w_{-1}(s)}ds = \int^{\infty}_0 \frac{\gamma_2(s)}{\gamma_1(s)} ds \geq z\int^{\infty}_0 \frac{1}{\gamma_1(s)} ds.
\end{equation}
Hence the end is parabolic because otherwise if we suppose  that $E$ is non-parabolic, by using proposition \ref{limitedx},
\begin{equation}
\int^{\infty}_0 \frac{1}{w_{-1}(s)}ds \geq z\int^{\infty}_0 \frac{1}{\sup_{s\in[0,\infty)}\gamma_1(s)}ds=\infty.
\end{equation}
Then by using the criterion for parabolicity given in proposition \ref{prop2.16} the end $E$ is parabolic (contradiction).
\end{proof}
\section{Proof of Theorem \ref{teoE}}\label{proofE}

Recall that Theorem \ref{teoE} states
\begin{thm}
 Let $\Sigma$ be complete and non-compact surface of finite topological type immersed in $\MT$ with $\kappa\in\mathbb{R}$. Suppose that there exists a compact subset $\Omega\subset \Sigma$ of $\Sigma$ such that every end of $\Sigma$ with respect to $\Omega$ is an end of revolution  in $\MT$. Then, $\Sigma$  is stochastically complete.
\end{thm}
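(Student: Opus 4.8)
The plan is to produce, for a suitable compact set $K\subset\Sigma$, a single $1$-superharmonic function $v$ on $\Sigma\setminus K$ with $v(x)\to\infty$ as $x\to\infty$, and then invoke Proposition \ref{prop2.15}. Since $\Sigma$ has finite topological type, $\Sigma\setminus\Omega$ has only finitely many unbounded connected components $E_1,\dots,E_m$, and by hypothesis each $E_i$ is a complete end of revolution in $\MT$. By rescaling the metric (a scaling preserves stochastic completeness, as it merely rescales time) I may assume $\kappa\in\{-1,0,1\}$. By Proposition \ref{prop2.16} each $E_i$ is isometric to a model $\mathbb{M}^2_{w}\setminus B_{\rho_i}(o_w)$ whose warping function is the corresponding $w_\kappa$ of (\ref{eq2.58}), and by the second part of Corollary \ref{cor2.14} each $E_i$ carries a compact $K_i\subset E_i$ and a $1$-superharmonic function $v_i$ on $E_i\setminus K_i$ with $v_i\to\infty$ at infinity, \emph{provided}
\[
\int^{\infty}\frac{\int_0^t w_\kappa(s)\,ds}{w_\kappa(t)}\,dt=\infty.
\]
Thus the problem reduces to verifying this single divergence for every end of revolution in each of the three space forms.

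The key step is an a priori growth bound of the form $w_\kappa(t)\le Ce^{t}$. For $\kappa=0$ the arc-length condition gives $|\dot w_0|=|\dot\gamma_1|\le1$, hence $w_0(t)\le w_0(0)+t$; for $\kappa=1$ the elementary inequality $2\gamma_1\le1+\gamma_1^2$ yields $w_1\le1$. The genuinely new case is $\kappa=-1$, where $w_{-1}=\gamma_1/\gamma_2$: differentiating and using the arc-length relation $\dot\gamma_1^2+\dot\gamma_2^2=\gamma_2^2$ (so that $|\dot\gamma_1|,|\dot\gamma_2|\le\gamma_2$) I expect
\[
|\dot w_{-1}|=\left|\frac{\dot\gamma_1}{\gamma_2}-\frac{\gamma_1\dot\gamma_2}{\gamma_2^2}\right|\le 1+w_{-1},
\]
and Gronwall's inequality then forces $w_{-1}(t)\le(1+w_{-1}(0))e^{t}-1$. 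In all three cases $V(t):=\int_0^t w_\kappa(s)\,ds$ therefore grows at most exponentially, $V(t)\le C'e^{t}$, i.e.\ $g(t):=\log V(t)$ satisfies $g(t)\le C''t$ for large $t$.

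To turn this into divergence of the model integral, write $V/w_\kappa=1/g'$ on $[1,\infty)$ and estimate dyadically by Cauchy--Schwarz:
\[
\int_T^{2T}\frac{dt}{g'}\ \ge\ \frac{\big(\int_T^{2T}dt\big)^2}{\int_T^{2T}g'\,dt}=\frac{T^2}{g(2T)-g(T)}\ \ge\ cT,
\]
where the last inequality uses $g(2T)-g(T)\le g(2T)\le 2C''T$ for $T$ large (with $c=1/(2C'')$). Summing over $T=2^k$ gives $\int^{\infty}V/w_\kappa\,dt=\infty$, which is exactly the required condition; equivalently, the exponential volume growth $V(t)\le C'e^{t}$ meets Grigor'yan's test $\int^{\infty} t\,dt/\log V(t)=\infty$. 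This settles all three $\kappa$ uniformly, and I note that for $\kappa\ge0$ one could instead argue that every end is parabolic (Theorem A), whence $\Sigma$ is recurrent and therefore stochastically complete.

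With the per-end functions in hand I would set $K:=\Omega\cup\bigcup_{i=1}^m K_i$ (a finite union, hence compact) and define $v:=v_i$ on each $E_i\setminus K_i$. Because the ends are pairwise disjoint open sets whose union is $\Sigma\setminus K$, superharmonicity being local shows $v$ is $1$-superharmonic on $\Sigma\setminus K$, and since there are finitely many ends and each $v_i\to\infty$, we get $v(x)\to\infty$ as $x\to\infty$. Proposition \ref{prop2.15} then yields the stochastic completeness of $\Sigma$. I expect the main obstacle to be the hyperbolic growth estimate $|\dot w_{-1}|\le 1+w_{-1}$ and the passage from exponential volume growth to divergence of the model integral; the gluing of the finitely many ends is only a mild technical point.
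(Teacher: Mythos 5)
Your proof is correct, and in the decisive hyperbolic case it takes a genuinely different route from the paper's. Both arguments reduce, via Corollary \ref{cor2.14}/Proposition \ref{prop2.16} and the gluing of finitely many ends through Proposition \ref{prop2.15}, to showing $\int^\infty \big(\int_0^t w_\kappa\big)/w_\kappa\,dt=\infty$, and both hinge on the same differential inequality $\dot w_{-1}\le 1+w_{-1}$ coming from the arc-length normalization. But where you apply Gronwall to get $w_{-1}(t)\le Ce^{t}$, hence exponential growth of $V(t)=\int_0^t w_\kappa$, and then conclude by a dyadic Cauchy--Schwarz estimate on $\int dt/g'$ with $g=\log V$ (essentially Grigor'yan's volume test specialized to models), the paper instead fixes $c>0$, splits $[0,\infty)$ into $I_+=\{\gamma_2/\gamma_1\ge c\}$ and $I_-$, and argues by cases: if $\int_{I_+}ds=\infty$ the end is already parabolic, and otherwise it integrates $1<\frac{w(\rho)}{w(s)}+\dots$ over $[\rho,R]$ to force one of $\int 1/w$ or $\int(\int w)/w$ to diverge. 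Your version is more uniform (it treats $\kappa=-1,0,1$ by one mechanism and makes the ``exponential volume growth $\Rightarrow$ stochastic completeness'' heuristic explicit), while the paper's version avoids Gronwall and yields the parabolic/non-parabolic dichotomy along the way. One small point to tidy: the step $g(2T)-g(T)\le g(2T)$ presupposes $g(T)\ge 0$, i.e.\ $V(T)\ge1$; if $V$ stays bounded this fails, but then $g(2T)-g(T)=\log\big(V(2T)/V(T)\big)$ is bounded and the same dyadic estimate gives divergence even more easily, so the gap is cosmetic. The rescaling remark reducing general $\kappa$ to $\kappa\in\{-1,0,1\}$ is also fine and is implicit in the paper.
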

\begin{proof}
When $\Sigma$ is immersed in $\RT$ or in $\ST$ the stochastic completeness of its ends is straight forward according that every parabolic surface  is stochastically complete (see \cite{GriExp}for instance). For surfaces $\Sigma$ in $\HT$ such that every of its ends with respect to some compact $\Omega\subset \Sigma$ is an end of revolution in $\HT$, we are going to show that there exist a $1$-superharmonic function satisfying the hypothesis of proposition \ref{prop2.15} (and hence $\Sigma$ is stochastically complete). In order to construct such a function  we are going to use the following proposition 
\begin{proposition}Let $E$ be an end of revolution in $\HT$, then, there exist a compact set $K\subset E$ and $1$-superharmonic function such that $v(x)\to\infty$ when $x\to\infty$.
\end{proposition}
\begin{proof}
From (\ref{eq2.58}) we have that 
$$
w_{-1}(s) = \frac{\gamma_1(s)}{\gamma_2(s)}
$$
hence,
$$
\frac{\dot w_{-1}(s)}{w_{-1}(s)} = \frac{\dot \gamma_1(s)}{\gamma_1(s)} - \frac{\dot\gamma_2(s)}{\gamma_2(s)}
$$
From the condition that $\gamma(s)$ is parameterized by arc length we have that $\vert \dot\gamma_1(s) \vert \le \gamma_2(s)$ and $\vert \dot\gamma_2(s) \vert \le \gamma_2(s)$. Then

\begin{equation}\label{eq5.1}
\frac{\dot w_{-1}(s)}{w_{-1}(s)}\leq \frac{\gamma_2(s)}{\gamma_1(s)} + \frac{\gamma_2(s)}{\gamma_2(s)} = \frac{1}{w_{-1}(s)} + 1
\end{equation}

For any $c>0$ we can now split the interval $I=[0,\infty)$ in two parts, $I = I_+ \cup I_-$ such that $I_+:=\{s \in \mathbb{R} : \frac{\gamma_2(s)}{\gamma_1(s)} \ge c\}$ and $I_-=\{s \in \mathbb{R} : \frac{\gamma_2(s)}{\gamma_1(s)} < c \}$. With $I_+ \cap I_- = \emptyset$, $\int_Ids = \int_{I_+}ds + \int_{I_-}ds = \infty$.We have again two cases:

\item[{\bfseries Case I:} $\int_{I_+}ds=\infty$.] Then
$$
\al
\int^{\infty} \frac{1}{w_{-1}(s)}ds & = \int_{I_+} \frac{1}{w_{-1}(s)}ds + \int_{I_-} \frac{1}{w_{-1}(s)}ds \geq  \int_{I_+} c ds +  \int_{I_-} \frac{1}{w_{-1}(s)}ds \\
& = c \cdot \int_{ I_+}ds +  \int_{I_-} \frac{1}{w_{-1}(s)}ds = \infty
\aal
$$
Hence, by applying corollary \ref{cor2.14} the proposition follows.
\item[{\bfseries Case II:} $\int_{ I_+}ds < \infty$ ($\int_{ I_-}ds = \infty$).] for $s>\rho$,

\begin{equation}
\al
w(s)-w(\rho) &= \int_{\rho}^s \dot w(r)dr = \int_{[\rho, s] \cap I_+}\dot w(r)dr + \int_{[\rho,s]\cap I_-} \dot w(r)dr\\
 & < \int_{[\rho, s] \cap I_+}w'(r)dr + (1+c) \int_{[\rho,s]\cap I_-} w(r)dr\\
\aal
\end{equation}

where we have considered that in $I_-$ by using inequality (\ref{eq5.1}), $\dot w(r) < (1+c) \, w(r)$. Then,

$$
1 < \frac{w(\rho)}{w(s)} + \frac{ \int_{[\rho, s] \cap I_+}w'(r)dr}{w(s)} + \frac{(1+c) \int_{[\rho,s]\cap I_-} w(r)dr}{w(s)}.
$$
Integrating in $[\rho,R]$,
\begin{equation}
\al
R-\rho &< \int^{R}_\rho\frac{w(\rho)}{w(s)} ds + \int^{R}_\rho \frac{ \int_{[\rho, s] \cap I_+}w'(r)dr}{w(s)} ds + \int^{R}_\rho \frac{(1+c)\int_{[\rho,s]\cap I_-} w(r)dr}{w(s)}ds\\
& \le \int^{\infty}_\rho\frac{w(\rho)}{w(s)} ds + \int^{\infty}_\rho \frac{ \int_{[\rho, s] \cap I_+}w'(r)dr}{w(s)} ds + (1+c)  \int^{\infty}_\rho  \frac{ \int_{\rho}^s w(r)dr}{w(s)}ds
\aal
\end{equation}
taking into account that  by using inequality (\ref{eq5.1}), $w'(r)\leq 1 + w(r) $, then for any $R>\rho$,
\begin{equation}
\begin{aligned}
R-\rho  < & w(\rho)\int^{\infty}_\rho\frac{1}{w(s)} ds + \int^{\infty}_\rho \frac{ \int_{[\rho, s] \cap I_+}(1+w(r))dr}{w(s)} ds\\
&+ (1+c) \int^{\infty}_\rho  \frac{ \int_{\rho}^s w(r)dr}{w(s)}ds\\
 \le & \left(w(\rho)+\int_{I_+}dr\right)\int^{\infty}\frac{1}{w(s)} ds + (2+c) \int^{\infty}_\rho  \frac{ \int_{\rho}^s w(r)dr}{w(s)}ds.
\end{aligned}
\end{equation}
Letting now $R$ tent to infinity we conclude that or
$$
\int^{\infty}_0\frac{1}{w(s)} ds=\infty
$$
or
$$
\int^{\infty}_\rho  \frac{ \int_{\rho}^s w(r)dr}{w(s)}ds.
$$
But in any case the proposition follows by using proposition \ref{prop2.16}.
\end{proof}
Now by applying the above proposition in each connected component of $\Sigma\setminus \Omega$ (call them $\{E_i\}$) there exist a compact set $K_i\subset E_i$ and a $1$-superharmonic function $v_i$ in $E_i\setminus K_i$ such that $v_i(x)\to\infty$ as $x\to\infty$. Defining a compact $C$ as
$$
C=\Omega\cup_i K_i
$$
and the function $\widetilde v:\Sigma\setminus C\to \mathbb{R}$ by
$$
\widetilde v(x):=v_i(x),\quad \text{ if }x\in E_i.
$$
we conclude that $\widetilde v$ is $1$-superharmonic function in $\Sigma\setminus C$ and $\widetilde v(x)\to\infty$ as $x\to\infty$. Hence, by applying proposition \ref{prop2.16} the Corollary is proved.
\end{proof}

\section{Proof of Theorem \ref{horoteo}}\label{proofF}

Recently in \cite{Pacelli2013} is proved that any non-compact surface $\Sigma$ which is stochastically and geodesically complete,  properly immersed into a horoball of the hyperbolic space $B\subset \HT$ has  
\begin{equation}
\sup_{x\in \Sigma}\Vert \vec H\Vert\geq 1,
\end{equation}
and
\begin{equation}
\sup_{x\in \Sigma}K_G\geq 0.
\end{equation}

Hence, there are no surfaces of revolution which are negatively curved and properly immersed into an horoball (statement (1) of Theorem \ref{horoteo}). Moreover, if $\Sigma$ is a cmc-surface, $\Sigma$ is a cmc one surface and therefore by using \cite{Rod98} is a horosphere. On the other hand if $\Sigma$ has constant non-positive sectional curvature, then $K_G=0$. But the only complete flat surface contained in a horoball is the horosphere (see Theorem 3 of \cite{Gal2000} for instance). 
\section{Movement of the centroid of a curve in $\mathbb{H}^2$ and its applications to the conformal type}

Given a regular curve $\gamma:[0,\infty)\to\mathbb{H}^2\subset \HT$, parametrized by arc length in the half space model of the hyperbolic space $$\gamma(s)=(\gamma_1(s),0,\gamma_2(s)),\quad {\rm with}\quad \gamma_1>0\text{ and }\gamma_2>0$$ We shall say that the segment $\gamma([0,s])$ has  \emph{centroid} $x_ g(\gamma([0,s]))$, given by
$$
x_g(\gamma([0,s])):=\frac{\int_{0}^s w_{-1}(t)dt}{s}
$$

\begin{theorem}\label{teo6.1} Suppose that the centroid of a regular curve $\gamma:[0,\infty)\to\mathbb{H}^2$ satisfies 
$$
x_g(\gamma[0,s])\leq C\, s
$$ for some $C>0$ and any $s\geq s_0$ for some $s_0>0$. Then the end of revolution given by
$$
f(\theta,s):=R_\theta \gamma
$$
is a parabolic end. \end{theorem}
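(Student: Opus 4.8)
The plan is to reduce the statement to the \emph{sufficient} condition for parabolicity furnished by Corollary \ref{suf2}, rather than trying to verify the exact criterion $\int_0^\infty ds/w_{-1}(s)=\infty$ of Proposition \ref{prop2.16} directly. The reason for this choice is that the centroid hypothesis only gives an \emph{upper} bound on $\int_0^s w_{-1}(t)\,dt$, and an upper bound on $\int w_{-1}$ does not by itself force $\int 1/w_{-1}$ to diverge: by Cauchy--Schwarz one only gets $\int_0^s dt/w_{-1}(t)\geq s^2\big/\big(\int_0^s w_{-1}\big)\geq 1/C$, which is merely bounded below. By contrast, Corollary \ref{suf2} (which comes from Theorem \ref{suficient}) asks precisely for the divergence of an integral whose denominator is $\int_0^t w_{-1}(s)\,ds$, and this is exactly the quantity controlled by the centroid.

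First I would rewrite the hypothesis. Since by (\ref{eq2.58}) the warping function of an end in $\HT$ is $w_{-1}(s)=\gamma_1(s)/\gamma_2(s)$, the centroid is $x_g(\gamma[0,s])=\frac{1}{s}\int_0^s w_{-1}(t)\,dt$, so the assumption $x_g(\gamma[0,s])\leq C s$ for $s\geq s_0$ is equivalent to
\[
\int_0^s w_{-1}(t)\,dt\leq C s^2,\qquad s\geq s_0 .
\]

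Next I would feed this bound into the criterion of Corollary \ref{suf2}. For every $t\geq s_0$ the inequality gives
\[
\frac{t}{\int_0^t w_{-1}(s)\,ds}\;\geq\;\frac{t}{C t^2}\;=\;\frac{1}{C t},
\]
and therefore
\[
\int^{\infty}\frac{t\,dt}{\int_0^t w_{-1}(s)\,ds}\;\geq\;\int_{s_0}^{\infty}\frac{dt}{C t}\;=\;\infty .
\]
By Corollary \ref{suf2} the end $E=R_\theta\gamma$ is then parabolic, which is the assertion.

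I do not expect a genuine obstacle here, since the substance of the argument is entirely packaged in Corollary \ref{suf2}. The only points to keep an eye on are that the centroid inequality is assumed only for $s\geq s_0$ (harmless, as parabolicity is a property of the end and only the behaviour of the integrand as $t\to\infty$ matters) and that $w_{-1}$ may vanish or be small near $s=0$ (again irrelevant, as the divergence is produced entirely by the tail). The one conceptual step worth flagging is the initial decision to target Corollary \ref{suf2}: the naive attempt to check $\int_0^\infty ds/w_{-1}=\infty$ directly fails, as the Cauchy--Schwarz computation above shows, so this weaker-looking sufficient condition is in fact the correct instrument.
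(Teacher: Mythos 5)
Your proposal is correct and is essentially identical to the paper's own proof: the paper likewise observes that $\tfrac{t}{\int_0^t w_{-1}(s)\,ds}=\tfrac{1}{x_g(\gamma([0,t]))}\geq \tfrac{1}{Ct}$ and concludes by Corollary \ref{suf2}. Your preliminary remark explaining why the exact criterion $\int ds/w_{-1}=\infty$ cannot be extracted from the centroid bound is a sensible justification for targeting the sufficient condition, but the substance of the argument matches the paper's.
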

\begin{proof}Observe that
\begin{equation}
\begin{aligned}
\int^\infty\frac{tdt}{\int_0^tw(s)ds}&\geq\int^\infty_{s_0}\frac{dt}{x_g(\gamma([0,t]))}\geq \int^\infty\frac{dt}{Ct}=\infty.
\end{aligned}
\end{equation}
And the Theorem is proved by using corollary \ref{suf2}.
\end{proof}

\begin{definition}
We shall say  that a regular curve $\gamma:[0,\infty)\to\mathbb{H}^2$, parametrized by arc length $\gamma(s)=(\gamma_1(s),0,\gamma_2(s))$,  (where we have used the half space model of the hyperbolic space and we have assumed $\gamma_1>0$, $\gamma_2>0$),  has \emph{confined centroid} if the limit $\lim_{s\to\infty}x_g(\gamma([0,s]))$ exists and
$$
\lim_{s\to\infty}x_g(\gamma([0,s]))<\infty.
$$ 
\end{definition}

\begin{theorem} Suppose that f a regular curve $\gamma:[0,\infty)\to\mathbb{H}^2$ has confined centroid. Then the end of revolution given by
$$
f(\theta,s):=R_\theta \gamma
$$
is a parabolic end. \end{theorem}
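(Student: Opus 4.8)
The plan is to reduce the ``confined centroid'' hypothesis to the hypothesis of the preceding Theorem~\ref{teo6.1}, so that parabolicity follows immediately from Corollary~\ref{suf2}. Recall that the centroid of the segment $\gamma([0,s])$ is
$$
x_g(\gamma([0,s]))=\frac{\int_0^s w_{-1}(t)\,dt}{s}.
$$
By assumption the limit $L:=\lim_{s\to\infty}x_g(\gamma([0,s]))$ exists and is finite. The first thing I would observe is that a finite limit is in particular bounded: there is an $M<\infty$ with $x_g(\gamma([0,s]))\le M$ for all $s$ large, say $s\ge s_0$. This is precisely the inequality $x_g(\gamma([0,s]))\le C\,s$ of Theorem~\ref{teo6.1} in the trivial case where the bound is even a constant rather than a multiple of $s$; indeed, since $s\ge s_0$ we have $M\le (M/s_0)\,s$, so taking $C=M/s_0$ gives $x_g(\gamma([0,s]))\le C\,s$ for all $s\ge s_0$.

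Having verified the hypothesis of Theorem~\ref{teo6.1}, I would simply invoke that theorem: the end of revolution $f(\theta,s)=R_\theta\gamma$ is parabolic. Concretely, unwinding the argument of Theorem~\ref{teo6.1}, one has
\begin{equation}
\int^\infty\frac{t\,dt}{\int_0^t w(s)\,ds}\;\ge\;\int^\infty_{s_0}\frac{dt}{x_g(\gamma([0,t]))}\;\ge\;\int^\infty_{s_0}\frac{dt}{M}=\infty,
\end{equation}
and then Corollary~\ref{suf2} gives parabolicity. The key point is that a confined (convergent) centroid is a strictly stronger condition than the linear-growth bound already treated, so no new analytic work is required.

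I do not expect any genuine obstacle here: the statement is essentially a corollary of Theorem~\ref{teo6.1}, and the only content is recognizing that a finite limit yields the boundedness needed to apply it. The one minor point to handle cleanly is the passage from ``$x_g$ is eventually bounded by a constant'' to ``$x_g(\gamma([0,s]))\le C\,s$ for $s\ge s_0$,'' which is immediate once $s$ is bounded below by $s_0>0$. Thus the entire proof amounts to two lines: extract a finite bound $M$ from the convergence hypothesis, note $M\le (M/s_0)s$ on $[s_0,\infty)$, and apply Theorem~\ref{teo6.1}.
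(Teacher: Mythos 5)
Your proposal is correct and follows essentially the same route as the paper: both arguments reduce to the identity $t/\int_0^t w_{-1}(s)\,ds = 1/x_g(\gamma([0,t]))$, bound $x_g$ by a constant for large $s$ (the paper uses $x_g(\gamma)+\epsilon$, you use $M$), and conclude divergence of the integral in Corollary~\ref{suf2}. Your extra step of formally deducing the hypothesis of Theorem~\ref{teo6.1} via $M\le (M/s_0)s$ is a harmless repackaging of the same computation.
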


\begin{proof} Since $\gamma$ has confined centroid, for each $\epsilon>0$ there exist $R$ large enough such that for any $t>R$
\begin{equation}
\frac{\int_{0}^t w_{-1}(s)ds}{t}\leq x_g(\gamma)+\epsilon.
\end{equation} 
Therefore
\begin{equation}
\begin{aligned}
\int_R^\infty\frac{tdt}{\int_0^tw(s)ds}&\geq\int_R^\infty\frac{tdt}{(x_g(\gamma)+\epsilon)t}=\infty.
\end{aligned}
\end{equation}
And the Theorem is proved by using corollary \ref{suf2}.
\end{proof}

\section{Examples of application} \label{ex}

We would like to highlight some examples with relevant properties of complete ends of revolution in $\MT$.

\subsection{Surfaces immersed into a ball of $\RT$}

The topic of complete immersions into a geodesic balls of $\RT$ has been largely studied from the Labyrinth example of Nadirashvili (cf. \cite{Nadi}). From the conformally point of view the Brownian motion of any complete bounded minimal surface in $\RT$ is transcient (non-recurrent)  (see \cite{PacMon} for instance). Moreover, the Brownian movement of a submanifold is transcient (see \cite{GJellet}) if the submanifold admits a complete immersion within a geodesic ball of radius $R$ with mean curvature vector field $\vec H$ bounded by
$$
\Vert \vec H \Vert <\frac{1}{R}
$$ 
Taking into account that by Theorem \ref{teounua} any end of revolution in $\RT$ is a parabolic end we can state
\begin{corollary}\label{cor7.1}
Let $\Sigma$ be a surface isometrically immersed into a geodesic ball $B_R\subset \RT$. Suppose that $\Sigma$ has at least one end of revolution in $\RT$. Then, the mean curvature vector field $H$ satisfies 
$$
\sup_{x\in \Sigma}\Vert \vec H(x) \Vert \geq \frac{1}{R}.
$$ 
\end{corollary}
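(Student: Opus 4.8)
The plan is to argue by contradiction using the parabolicity of ends of revolution established in Theorem \ref{teounua} together with the transience criterion for bounded submanifolds recalled just above the statement. Suppose, contrary to the conclusion, that
\[
\sup_{x\in\Sigma}\Vert\vec H(x)\Vert<\frac{1}{R}.
\]
Since $\Sigma$ is isometrically immersed into the geodesic ball $B_R\subset\RT$, the quoted result of \cite{GJellet} applies verbatim: a complete submanifold admitting an immersion inside a geodesic ball of radius $R$ with $\Vert\vec H\Vert<1/R$ has transient Brownian motion, i.e.\ $\Sigma$ is non-recurrent. This is the first step, and it is essentially a citation rather than a computation.

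Next I would invoke the characterization, recalled in the introduction of the excerpt, that for a complete non-compact surface with a precompact set $D$, the Brownian motion is recurrent if and only if every end of $\Sigma$ with respect to $D$ is parabolic. The logical content I want is the contrapositive direction: if $\Sigma$ is non-recurrent (transient), then at least one end of $\Sigma$ must be non-parabolic. Here I must be slightly careful that the hypothesis ``$\Sigma$ has at least one end of revolution in $\RT$'' is compatible with the recurrence/parabolicity dictionary: I would choose $D$ to be a precompact set whose complement contains the given end of revolution as one connected unbounded component.

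The third step is to derive the contradiction. By Theorem \ref{teounua}, any complete end of revolution in $\RT$ is parabolic. But the end of revolution guaranteed by hypothesis is one of the ends of $\Sigma$, and transience forces the existence of a non-parabolic end; if every end other than the end of revolution were also handled, the simplest closure is to note that the recurrence criterion requires \emph{all} ends to be parabolic, so a single parabolic end does not suffice to guarantee recurrence. Therefore I expect the cleanest route is: transience of $\Sigma$ contradicts recurrence, and recurrence would follow \emph{if} all ends were parabolic — so the tension is really about whether the non-revolution ends are controlled.

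The main obstacle, and the point requiring the most care, is precisely this last gap: Theorem \ref{teounua} only tells us the \emph{ends of revolution} are parabolic, but $\Sigma$ may have other ends that are not of revolution, and the recurrence criterion needs every end to be parabolic. Thus the contradiction cannot come merely from parabolicity of one end. I suspect the intended argument instead localizes transience to the end of revolution itself: transience of Brownian motion restricted to (or induced on) the non-parabolic end — equivalently, the negation $\Vert\vec H\Vert<1/R$ forces the specific revolution end to be non-parabolic via the intrinsic estimate on its warping function, directly contradicting Theorem \ref{teounua}. So the delicate step I would need to justify rigorously is that the bounded-immersion transience result, applied on the end of revolution sitting inside $B_R$, yields non-parabolicity of \emph{that} end, which is exactly what Theorem \ref{teounua} forbids.
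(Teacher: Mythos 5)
Your proof as written has a genuine gap, and you have in fact diagnosed it yourself: the route through global transience of $\Sigma$ plus the dictionary ``recurrent $\Leftrightarrow$ all ends parabolic'' only yields that \emph{some} end of $\Sigma$ is non-parabolic, while Theorem \ref{teounua} only controls the end of revolution; since $\Sigma$ may have other ends that are not of revolution, no contradiction follows. Your proposed repair --- that the bound $\Vert\vec H\Vert<1/R$ should force non-parabolicity of the specific revolution end ``via the intrinsic estimate on its warping function'' --- is the right instinct (localize to the end) but the wrong mechanism, and it is left entirely unjustified. The warping function of an end of revolution in $\RT$ satisfies $w_0(t)\le t+w_0(0)$ and always produces parabolicity; there is no way to extract non-parabolicity of the end from $w_0$ alone, and the mean curvature bound does not enter the intrinsic warped metric in any direct way.

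The paper closes the gap differently. Assuming $\sup_\Sigma\Vert\vec H\Vert<1/R$, Corollary 2.7 of \cite{GJellet} gives a positive Cheeger constant $h(\Sigma)>0$ for the whole immersed surface. The Cheeger constant is monotone under passing to subdomains, so the end of revolution $E$ inherits $h(E)>0$; by Cheeger's inequality $E$ then has positive fundamental tone $\lambda^*(E)>0$, and a positive fundamental tone forces an end to be non-parabolic (see \cite{GriExp}). This contradicts Theorem \ref{teounua} applied to $E$ itself, with no need to control the other ends of $\Sigma$. So the missing idea in your proposal is precisely the isoperimetric (Cheeger constant) localization: it is what converts the extrinsic hypothesis $\Vert\vec H\Vert<1/R$ into a statement about the single end $E$, which is where your argument stalls.
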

\begin{remark}
Jorge and Xavier proved in \cite{JorgeXavier81},  that every submanifold $M$ whose scalar curvature is bounded from below immersed in a geodesic ball $B_R\subset \mathbb{R}^n$ of radius $R$ satisfies
$$
\sup_{M}\Vert \vec H\Vert\geq \frac{1}{R}
$$
\end{remark}
\begin{proof}[Proof of Corollary \ref{cor7.1}]
Suppose by the contrary that 
$$
\sup_{x\in \Sigma}\Vert \vec H(x) \Vert < \frac{1}{R}.
$$ 
Then, by using Corollary 2.7 of \cite{GJellet}, $\Sigma$ has positive Cheeger constant $h(\Sigma)>0$, in particular the end of revolution $E$ has also positive Cheeger constant $h(E)>0$, and therefore $E$ has positive fundamental tone $\lambda^*(E)$ which implies that $E$ is non-parabolic (see \cite{GriExp}) in contradiction with Theorem \ref{teounua}.
\end{proof}
In the particular case of minimal surfaces, Corollary \ref{cor7.1} implies that there does not exist bounded surfaces of revolution. Actually, that agrees with the classical result of Bonnet which states that, up to rigid motions, in $\RT$ the only minimal surfaces of revolution are the catenoid and the plane.

A natural question is to ask for the existence of recurrent surfaces immersed into a geodesic ball of $\RT$. We can guarantee the existence of such a surfaces and it can be seen through the following \emph{self made} example. 
\begin{example}
Consider the curve $\Gamma: \mathbb{R} \to \mathbb{R}^3$ parametrized as (see also figure \ref{fig:example1}):
$$
\alpha_{R,a}(t) = \left( \frac{(R-a)t^2}{(t^2+1)}+a, 0,\sin \left( \frac{(R-a)t^3}{(t^2+1)}+at \right) \right)
$$

When it is rotated over the $(0,0,x_3)$ axis, it generates a complete surface of revolution in $\RT$ which is bounded, i.e. it can be kept inside a cylinder $\mathbb{R} \times \mathbb{S}^1$ with radius $R$ and height $h=1$. By using Theorem \ref{teounua} this surface is recurrent. Note also that the mean curvature of this surface is unbounded.

\begin{figure} 
   \centering
   \includegraphics[width=1.2in]{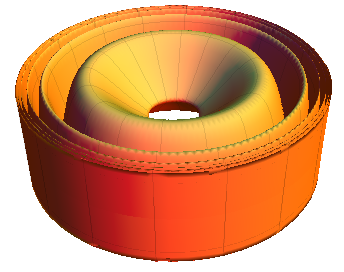}
   \includegraphics[width=1.3in]{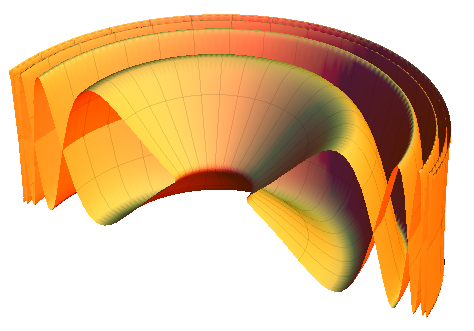}
   \includegraphics[width=1.3in]{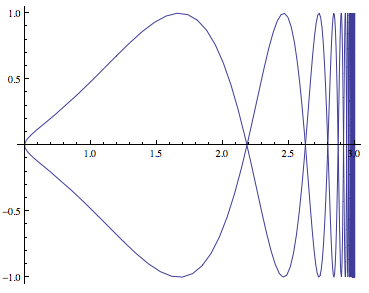}   
   \caption{A parabolic surface in a ball of $\RT$ and its generating curve $\alpha_{R,a}(t) $.}
   \label{fig:example1}
\end{figure}

\end{example}

\vspace{0.5cm}

\subsection{Surfaces in $\HT$ with transcient Brownian movement} The spherical catenoids  immersed in $\HT$ are example of surfaces of revolution in $\HT$ with transcient (non-recurrent) Brownian movement. Spherical catenoids have been studied in \cite{DoDa}, \cite{Mo} or \cite{Seo} and, specifically using the Upper Halfspace Model in \cite{berard2010}. 

\begin{example}[Spherical catenoids]
Spherical catenoids are the minimal complete surfaces of revolution generated by the rotation of the family of curves
$$
\gamma_a(s) = \left( e^{\Lambda_a(s)} \tanh(y_a(s)), 0,\frac{e^{\Lambda_a(s)}}{\cosh(y_a(s))} \right) 
$$ 
where 
$$
y_a(s):=a+\int_0^s\frac{\cosh(2a)\sinh(2t)}{(\cosh(2a)^2\cosh(2t)^2-1)^{\frac{1}{2}}}dt.
$$
and
$$
\Lambda_a(s):=\sqrt{2}\sinh(2a)\int_0^s\frac{(\cosh(2a)\cosh(2t)-1)^{\frac{1}{2}}}{\cosh^2(2a)\cosh^2(2t)-1}dt.
$$
The warping function is thus given by
$$
w_{-1}(s) = \frac{\gamma_{a1}(s)}{\gamma_{a2}(s)}  = \sinh(y_a(s))
$$
and hence the following integral can be showed as a divergent integral.
$$
\int^{\infty} \frac{1}{w_{-1}(s)} ds = \int^{\infty} \frac{1}{\sinh(y_a(s))}ds < \infty
$$
proving that each one of the ends of the surface is non-parabolic. However, transcience of spherical catenoids can be proved taking into account that spherical catenoids are minimal surfaces, and every minimal surface of $\HT$ is transcient (see \cite{MPcapacity} for instance).

Observe (see figure \ref{fig:example2})  that by Theorem \ref{teo4.1}, the part of the curve $\gamma_a(s)$ lying over an arbitrary line $t \to (t, c \cdot t)$ has to be of finite length.

\begin{figure} 
   \centering
   \includegraphics[width=1.5 in]{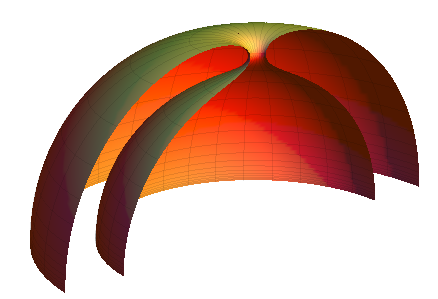} \hspace{0.5cm}
   \includegraphics[width=1.5 in]{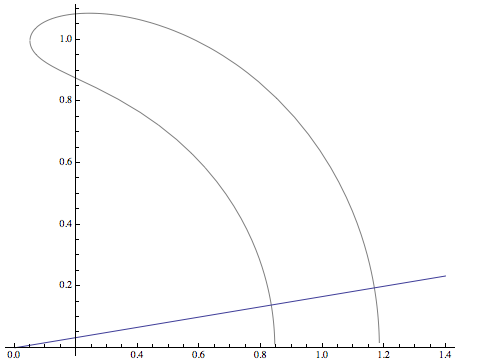} \\ \vspace{0.5cm}
   \includegraphics[width=1.7 in]{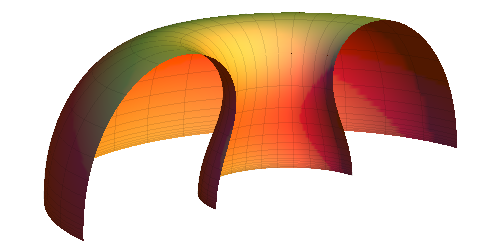}  \hspace{0.5cm}
   \includegraphics[width=1.7 in]{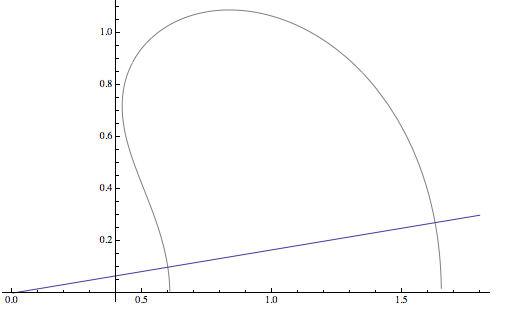} 
   \caption{Half \emph{Spherical Catenoids} immersed in $\HT$: $\Sigma_{0.05}$ and $\Sigma_{0.5}$. Observe that the catenoid lies under a horosphere, the part of the profile curve lying over an arbitrary line $t \to (t,0, c \cdot t)$ has finite length (Theorem \ref{teo4.1}), each one of the two ends approaches to the plane $\{x_3=0\}$ (Corollary \ref{corendhyp}), and $\sup x_1<\infty$ (Proposition \ref{limitedx}).}
   \label{fig:example2}
\end{figure}
\end{example}
\subsection{Surfaces in $\HT$ with recurrent Brownian motion}To construct a surface of revolution in $\HT$ with recurrent Brownian motion can be achieved, by using our Theorem \ref{teoendh3},  if we construct a surface such that every end is of revolution and every end is on a $c$-cone for some $c>0$. In our example we are using clothoids. 
\begin{example}
The \emph{clothoids} or \emph{spirals of Cornu} are curves generated by pairs of functions of the form
$$
\text{clothoid}[n,a][t]= a \left( \int_0^t \sin \left(\frac{s^{n+1}}{n+1} \right)ds,0, \int_0^t \cos \left( \frac{s^{n+1}}{n+1} \right)ds \right)
$$
and commonly used in construction (cf. \cite{Gray}). By changing $t \to e^t$, in case $a=n=1$, we obtain a complete curve which can be easily immersed in $\mathbb{H}^2$. The surface of revolution obtained when rotating the curve over the vertical axes appears to have two parabolic ends (see figure \ref{fig:example3}). Note that the immersion of the surface is not proper.

\begin{figure} 
   \centering
   \includegraphics[scale=0.25]{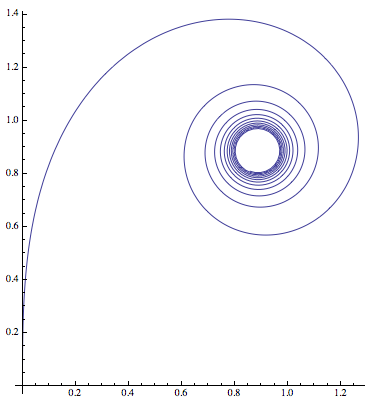} 
        \includegraphics[scale=0.35]{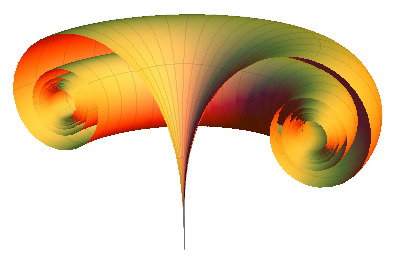} \\
   \caption{Clothoid or spiral of Cornu: the curve, the curve reparameterized, and the rotation with parameters $a=1, n=1$.}
   \label{fig:example3}
\end{figure}
\end{example}
\begin{example}[horosphere]
An other interesting example of surface of revolution is the horosphere which in the upper half space model of $\HT$ is just the $x_3=z$ plane. An end of revolution can be obtained rotating the parametrized by arc length curve
$$
\gamma:[0,\infty)\to \HT,\quad \gamma(s)=(z\, s+1,0,z)
$$  
along the $x_3$-axis. Observe that
$$
w_{-1}(s)=\frac{\gamma_1(s)}{\gamma_2(s)}=s+\frac{1}{z}.
$$
If we observe the movement of the centroid
$$
x_g(\gamma([0,s])=\frac{\int_0^sw_{-1}(s)ds}{s}=\frac{s}{2}+\frac{1}{z}
$$
Hence, given $s_0>0$, for any $s\geq s_0$ and denoting $C:=(\frac{1}{2}+\frac{1}{zs_0})$,
$$
x_g(\gamma([0,s])=s(\frac{1}{2}+\frac{1}{zs})\leq C\, s
$$
By Theorem \ref{teo6.1} the horosphere is a recurrent surface. This result can be achieved directly by using Theorem \ref{slice} or by the fact that since the horosphere is a flat surface, it has finite total curvature and hence (by using \cite{I1}) the Brownian movement is recurrent.
\end{example}
\subsection{Surfaces of revolution in $\HT$ with one parabolic end and one non-parabolic end} 
The following example uses vertical lines instead of horizontal lines as in the horospheres

\begin{example}[cylinders]
\begin{figure}[H] 
   \centering
   \includegraphics[scale=0.25]{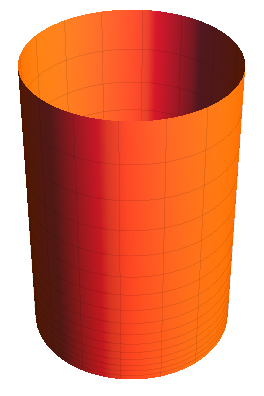}\quad 
   \includegraphics[scale=0.25]{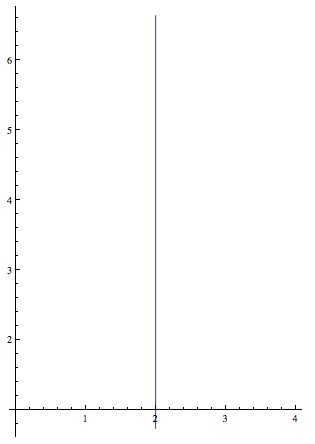}
   \caption{The cylinder immersed in $\HT$ with parameters $b=2, c=1$. This surface has two ends, one parabolic and the other non-parabolic.}
   \label{fig:example4}
\end{figure}
 The family of parameterized curves in $\HT$
$$
\beta_{b,c}:(-\infty,\infty)\to\HT,\quad\beta_{b,c}(t) = (b,0, c \cdot e^t)
$$
can be rotated over the vertical axes $(0,x_3)$ to get a cylinder (see figure \ref{fig:example4}). This surface of revolution in $\HT$ has two ends of revolution in $\HT$. One upper end $E^+$ given by the rotation of the parametrized by arc length curve 
$$
\gamma_{E^+}:[0,\infty)\to\HT,\quad\gamma_{b,c}(s) = (b,0, c \cdot e^s)
$$
And an other end $E^-$ obtained by the rotation of the parametrized by arc length curve 
$$
\gamma_{E^-}:[0,\infty)\to\HT,\quad\gamma_{b,c}(s) = (b,0, c \cdot e^{-s})
$$
Observe that the end $E^+$ is on the $\frac{c}{b}$-cone, and hence by Theorem \ref{teoendh3} is a parabolic end. On the other hand the end $E_{-}$ has
$$
w_{-1}(s) = \frac{b}{c}e^s
$$
thus
$$
\int^{\infty}_0 \frac{1}{w_{-1}(s)} ds = \int^{\infty}_0 \frac{c}{b}e^{-s} ds = \frac{c}{b}<\infty
$$
the end $E_{-}$ is therefore non-parabolic by using proposition \ref{prop2.16}.
\end{example}
\vspace{0.5cm}

\def\cprime{$'$} \def\cprime{$'$} \def\cprime{$'$} \def\cprime{$'$}
  \def\cprime{$'$} \def\cprime{$'$} \def\cprime{$'$} \def\cprime{$'$}
  \def\cprime{$'$} \def\cprime{$'$}
\providecommand{\bysame}{\leavevmode\hbox to3em{\hrulefill}\thinspace}
\providecommand{\MR}{\relax\ifhmode\unskip\space\fi MR }
\providecommand{\MRhref}[2]{%
  \href{http://www.ams.org/mathscinet-getitem?mr=#1}{#2}
}
\providecommand{\href}[2]{#2}


\begin{thebibliography}{BdLPS13}

\bibitem[Ahl35]{Ahl1}
Lars~V. Ahlfors, \emph{Sur le type d'un surface de riemann}, C.R. Acad. Sci.
  Paris \textbf{1} (1935), no.~201, 30--32.

\bibitem[Ahl52]{Ahl}
\bysame, \emph{On the characterization of hyperbolic {R}iemann surfaces}, Ann.
  Acad. Sci. Fennicae. Ser. A. I. Math.-Phys. \textbf{1952} (1952), no.~125, 5.
  \MR{0054729 (14,970a)}

\bibitem[BdLPS13]{Pacelli2013}
G.~Pacelli Bessa, Jorge~H. de~Lira, Stefano Pigola, and Alberto~G. Setti,
  \emph{Curvature estimates for submanifolds immersed into horoballs and
  horocylinders}, 2013.

\bibitem[BM07]{PacMon}
G.~Pacelli Bessa and J.~F{\'a}bio Montenegro, \emph{An extension of {B}arta's
  theorem and geometric applications}, Ann. Global Anal. Geom. \textbf{31}
  (2007), no.~4, 345--362. \MR{2325220 (2008e:53112)}

\bibitem[BSE10]{berard2010}
Pierre B{\'e}rard and Ricardo Sa~Earp, \emph{Lindel{\"o}f’s theorem for
  hyperbolic catenoids}, Proceedings of the American Mathematical Society
  \textbf{138} (2010), no.~10, 3657--3669.

\bibitem[dC92]{DoCarmo2}
Manfredo~Perdig{\~a}o do~Carmo, \emph{Riemannian geometry}, Mathematics: Theory
  \& Applications, Birkh\"auser Boston Inc., Boston, MA, 1992, Translated from
  the second Portuguese edition by Francis Flaherty. \MR{1138207 (92i:53001)}

\bibitem[dCD83]{DoDa}
M.~do~Carmo and M.~Dajczer, \emph{Rotation hypersurfaces in spaces of constant
  curvature}, Trans. Amer. Math. Soc. \textbf{277} (1983), no.~2, 685--709.
  \MR{694383 (85b:53055)}

\bibitem[GH14]{Gri2014}
A.~Grigor'yan and X.~Huang, \emph{Stochastic completeness of jump processes on
  metric measure spaces}, vol.~88, 2014, cited By 0, pp.~209--224.

\bibitem[Gim14]{GJellet}
Vicent Gimeno, \emph{Isoperimetric inequalities for submanifolds.
  {J}ellett–{M}inkowski’s formula revisited}, To appear in Proc. London
  Math. Soc. (2014), 1--22.

\bibitem[GMM00]{Gal2000}
Jos{\'e}~A. G{\'a}lvez, Antonio Mart{\'{\i}}nez, and Francisco Mil{\'a}n,
  \emph{Flat surfaces in the hyperbolic {$3$}-space}, Math. Ann. \textbf{316}
  (2000), no.~3, 419--435. \MR{1752778 (2002b:53013)}

\bibitem[Gra98]{Gray}
Alfred Gray, \emph{Modern differential geometry of curves and surfaces with
  {M}athematica}, second ed., CRC Press, Boca Raton, FL, 1998. \MR{1688379
  (2000i:53001)}

\bibitem[Gri99]{GriExp}
Alexander Grigor{\cprime}yan, \emph{Analytic and geometric background of
  recurrence and non-explosion of the {B}rownian motion on {R}iemannian
  manifolds}, Bull. Amer. Math. Soc. (N.S.) \textbf{36} (1999), no.~2,
  135--249. \MR{1659871 (99k:58195)}

\bibitem[Gri09]{GriBook}
Alexander Grigor'yan, \emph{Heat kernel and analysis on manifolds}, AMS/IP
  Studies in Advanced Mathematics, vol.~47, American Mathematical Society,
  Providence, RI, 2009. \MR{2569498 (2011e:58041)}

\bibitem[GW79]{GW}
R.~E. Greene and H.~Wu, \emph{Function theory on manifolds which possess a
  pole}, Lecture Notes in Mathematics, vol. 699, Springer, Berlin, 1979.
  \MR{521983 (81a:53002)}

\bibitem[Has60]{Khas}
R.~Z. Has{\cprime}minski{\u\i}, \emph{Ergodic properties of recurrent diffusion
  processes and stabilization of the solution of the {C}auchy problem for
  parabolic equations}, Teor. Verojatnost. i Primenen. \textbf{5} (1960),
  196--214. \MR{0133871 (24 \#A3695)}

\bibitem[HM90]{MeHo1990}
D.~Hoffman and W.~H. Meeks, III, \emph{The strong halfspace theorem for minimal
  surfaces}, Invent. Math. \textbf{101} (1990), no.~2, 373--377. \MR{1062966
  (92e:53010)}

\bibitem[HP11]{HP}
Ana Hurtado and Vicente Palmer, \emph{A note on the {$p$}-parabolicity of
  submanifolds}, Potential Anal. \textbf{34} (2011), no.~2, 101--118.
  \MR{2754966 (2012e:53099)}

\bibitem[Ich82a]{I1}
Kanji Ichihara, \emph{Curvature, geodesics and the {B}rownian motion on a
  {R}iemannian manifold. {I}. {R}ecurrence properties}, Nagoya Math. J.
  \textbf{87} (1982), 101--114. \MR{676589 (84m:58166a)}

\bibitem[Ich82b]{I2}
\bysame, \emph{Curvature, geodesics and the {B}rownian motion on a {R}iemannian
  manifold. {II}. {E}xplosion properties}, Nagoya Math. J. \textbf{87} (1982),
  115--125. \MR{676590 (84m:58166b)}

\bibitem[JX81]{JorgeXavier81}
Luqu{\'e}sio P. de~M. Jorge and Frederico~V. Xavier, \emph{An inequality
  between the exterior diameter and the mean curvature of bounded immersions},
  Math. Z. \textbf{178} (1981), no.~1, 77--82. \MR{627095 (82k:53080)}

\bibitem[KN96]{Koba1}
Shoshichi Kobayashi and Katsumi Nomizu, \emph{Foundations of differential
  geometry. {V}ol. {I}}, Wiley Classics Library, John Wiley \& Sons Inc., New
  York, 1996, Reprint of the 1963 original, A Wiley-Interscience Publication.
  \MR{1393940 (97c:53001a)}

\bibitem[Kob95]{KobaTrans}
Shoshichi Kobayashi, \emph{Transformation groups in differential geometry},
  Classics in Mathematics, Springer-Verlag, Berlin, 1995, Reprint of the 1972
  edition. \MR{1336823 (96c:53040)}

\bibitem[Lee97]{John}
John~M. Lee, \emph{Riemannian manifolds}, Graduate Texts in Mathematics, vol.
  176, Springer-Verlag, New York, 1997, An introduction to curvature.
  \MR{1468735 (98d:53001)}

\bibitem[Lee03]{Lee-man}
\bysame, \emph{Introduction to smooth manifolds}, Graduate Texts in
  Mathematics, vol. 218, Springer-Verlag, New York, 2003. \MR{1930091
  (2003k:58001)}

\bibitem[Li00]{Li2000}
Peter Li, \emph{Curvature and function theory on {R}iemannian manifolds},
  Surveys in differential geometry, Surv. Differ. Geom., VII, Int. Press,
  Somerville, MA, 2000, pp.~375--432. \MR{1919432 (2003g:53047)}

\bibitem[Mor81]{Mo}
H.~Mori, \emph{Minimal surfaces of revolution in $h^3$ and their global
  stability}, Indiana Univ. Math. Jour. \textbf{30} (1981), 787--794.

\bibitem[MP03]{MPcapacity}
S.~Markvorsen and V.~Palmer, \emph{Transience and capacity of minimal
  submanifolds}, Geom. Funct. Anal. \textbf{13} (2003), no.~4, 915--933.
  \MR{2006562 (2005d:58064)}

\bibitem[MP04]{Meeks-Perez-2}
William~H. Meeks, III and Joaqu{\'{\i}}n P{\'e}rez, \emph{Conformal properties
  in classical minimal surface theory}, Surveys in differential geometry.
  {V}ol. {IX}, Surv. Differ. Geom., IX, Int. Press, Somerville, MA, 2004,
  pp.~275--335. \MR{2195411 (2006k:53015)}

\bibitem[MP10]{MP3}
Steen Markvorsen and Vicente Palmer, \emph{Extrinsic isoperimetric analysis of
  submanifolds with curvatures bounded from below}, J. Geom. Anal. \textbf{20}
  (2010), no.~2, 388--421. \MR{2579515 (2011c:53131)}

\bibitem[Nad96]{Nadi}
Nikolai Nadirashvili, \emph{Hadamard's and {C}alabi-{Y}au's conjectures on
  negatively curved and minimal surfaces}, Invent. Math. \textbf{126} (1996),
  no.~3, 457--465. \MR{1419004 (98d:53014)}

\bibitem[Nev40]{Nev}
Rolf Nevanlinna, \emph{Ein {S}atz \"uber offene {R}iemannsche {F}l\"achen},
  Ann. Acad. Sci. Fennicae (A) \textbf{54} (1940), no.~3, 18. \MR{0003813
  (2,276c)}

\bibitem[Pet98]{Petersen}
Peter Petersen, \emph{Riemannian geometry}, Graduate Texts in Mathematics, vol.
  171, Springer-Verlag, New York, 1998. \MR{1480173 (98m:53001)}

\bibitem[RR98]{Rod98}
Lucio Rodriguez and Harold Rosenberg, \emph{Half-space theorems for mean
  curvature one surfaces in hyperbolic space}, Proc. Amer. Math. Soc.
  \textbf{126} (1998), no.~9, 2755--2762. \MR{1458259 (98k:53015)}

\bibitem[Seo11]{Seo}
Keomkyo Seo, \emph{Stable minimal hypersurfaces in the hyperbolic space}, J.
  Korean Math. Soc. \textbf{48} (2011), no.~2, 253--266. \MR{2789454
  (2012c:53096)}

\bibitem[Tro99]{Troya99}
M.~Troyanov, \emph{Parabolicity of manifolds}, Siberian Adv. Math. \textbf{9}
  (1999), no.~4, 125--150. \MR{1749853 (2001e:31013)}

\end{thebibliography}
\end{document}